\newcommand{\A}{\mathcal{A}}
\newtheorem{theorem}{Theorem}
\newtheorem*{theorem*}{Theorem}
\newtheorem{corollary}{Corollary}
\newtheorem*{corollary*}{Corollary}
\newtheorem{lemma}{Lemma}
\newtheorem{proposition}{Proposition}
\newtheorem{conjecture}{Conjecture} 
\newtheorem{question}{Question} 
\newtheorem{remark}{Remark} 
\theoremstyle{definition}
\newtheorem{definition}{Definition}
\begin{document}

\title{The symplectic structure for renormalisation of circle diffeomorphisms with breaks}

\author[Selim Ghazouani]{S. Ghazouani}
\address{Mathematics Institute, University of Warwick, Coventry CV4 7AL, United Kingdom}
\email{s.ghazouani@warwick.ac.uk}

\author[Kostantin Khanin]{K. Khanin}
\address{3359 Mississauga Road,
Mississauga, ON L5L 1C6, Canada}
\email{khanin@math.utoronto.ca}

\maketitle

\begin{abstract}

In this article we prove that iterated renormalisations of $\mathcal{C}^r$ circle diffeomorphisms with $d$ breaks, $r>2$, with given size of breaks, converge to an invariant family of piecewise Moebius maps, of dimension $2d$. We prove that this invariant family identifies with a \textit{relative character variety} $\chi(\pi_1 \Sigma, \mathrm{PSL}(2,\mathbb{R}), \mathbf{h})$ where $\Sigma$ is a $d$-holed torus, and that the renormalisation operator identifies with a sub-action of the mapping class group $\mathrm{MCG}(\Sigma)$. This action is known to preserves a symplectic form, thanks to the work of Guruprasad-Huebschmann-Jeffrey-Weinstein \cite{GHJW}. Its pull-back through the aforementioned identification provides a symplectic form invariant by renormalisation.
\end{abstract}

\section{Introduction}

Renormalisation is a powerful method to analyse the dynamics of parabolic dynamical systems. It is of major importance in the study of many cases in one-dimensional dynamics, such as unimodal maps at the boundary of chaos \cite{CoulletTresser,McMullen,Sullivan}, Lorenz maps \cite{Winckler,MartensWinckler} or circle diffeomorphisms with singularities. For the latter case, renormalisation methods allowed to successfully understand rigidity phenomena in the case of circle diffeormorphisms \cite{Herman1,KhaninSinai,KhaninTeplinsky}, circle diffeomorphisms with one critical point \cite{deFariadeMelo,deFariadeMelo2} and circle diffeomorphisms with one break \cite{KhaninKhmelev,KhaninKocic,KhaninYampolsky,KhaninKocicMazzeo}. 

\vspace{2mm}

\paragraph*{\bf Circle diffeomorphisms with breaks} We will discuss in this article the renormalisation theory of circle diffeomorphisms with several break points. A circle diffeomorphism with breaks (or break points) is a circle homeomorphism smooth away from finitely many points (the \textit{breaks}) where the derivative has a discontinuity. The \textit{size} of a break is the ratio between the right and the left derivative. The case of a single break was extensively studied in the series of articles \cite{KhaninKhmelev} \cite{KhaninTeplinsky2}, \cite{KhaninKocic}, \cite{KhaninYampolsky} and \cite{KhaninKocicMazzeo}.  The major achievement is the following rigidity theorem

\begin{theorem}[Khanin-Kocic-Mazzeo, \cite{KhaninKocicMazzeo}]
There is a full measure set of numbers $\rho \in [0,1[$ such that the following holds. If $T_1$ and $T_2$ are two circle diffeomorphisms with one break, with same size of the break and same rotation number $\rho$, then $T_1$ and $T_2$ are $\mathcal{C}^1$-conjugate.
\end{theorem}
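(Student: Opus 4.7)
The plan is to follow the general renormalisation strategy developed for smooth circle diffeomorphisms by Sinai--Khanin and adapted to the one-break setting in the earlier works of Khanin--Khmelev, Khanin--Kocic and Khanin--Yampolsky. The argument splits into three stages: uniform real bounds for iterated renormalisations, exponential convergence of the renormalisation operator on a full measure set of rotation numbers, and construction of a $\mathcal{C}^1$ conjugacy by gluing approximate conjugacies scale by scale.

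One first introduces the renormalisation operator $R$ on the space of $\mathcal{C}^r$ circle maps with one break of prescribed size: given $T$ with rotation number $\rho=[a_1,a_2,\ldots]$, the renormalisation $R^n T$ is the first return map of $T$ to an interval of level $n$ in the dynamical partition generated by the orbit of the break, affinely rescaled to unit length. The combinatorics at step $n$ is prescribed by $a_n$, and both the rotation number and the size of the break are preserved by $R$. One then establishes real bounds: the elements of the dynamical partition of level $n$ are commensurable uniformly in $n$, a Denjoy-type statement relying on cross-ratio distortion estimates for $\mathcal{C}^r$ maps with $r>2$, adapted so that the break only introduces a bounded multiplicative error. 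Consequently $\{R^n T_1\}$ and $\{R^n T_2\}$ lie in a common compact family in the $\mathcal{C}^1$ topology.

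The heart of the argument is to prove that $R^n T_1$ and $R^n T_2$ converge to each other exponentially fast, under an appropriate arithmetic condition on the partial quotients $a_n$. The strategy is to identify the attractor of $R$ with a finite-dimensional family of piecewise Moebius maps---for one break, a two-parameter family, corresponding to the $d=1$ case of the main identification of this article---and then establish hyperbolicity of $R$ in a neighbourhood of this attractor. The directions tangent to the attractor are fixed by the conserved quantities (rotation number and break size), so only the transverse contracting directions contribute. Exponential convergence, combined with Koebe distortion, then delivers a $\mathcal{C}^1$ conjugacy between $T_1$ and $T_2$ via the standard gluing of approximate conjugacies on the dynamical partitions at every scale.

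The main obstacle is the hyperbolicity step. In contrast to the smooth case, where Sinai--Khanin's renormalisation is uniformly hyperbolic over all Diophantine rotation numbers, the restriction of $R$ to the piecewise Moebius attractor is only non-uniformly hyperbolic, with neutral directions along the two-parameter family that are hard to disentangle from the contracting ones. Controlling the unstable direction requires a subtle arithmetic condition on $\rho$ that holds on a full measure subset of $[0,1[$ but fails for exceptional irrational rotation numbers---hence the full-measure restriction in the statement.
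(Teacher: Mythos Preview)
The paper does not contain a proof of this theorem: it is quoted in the introduction as a result of Khanin--Kocic--Mazzeo \cite{KhaninKocicMazzeo}, attributed by citation and not re-proved. There is therefore no ``paper's own proof'' to compare your proposal against.

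That said, your outline is a faithful summary of the strategy in the cited literature (real bounds via Denjoy--Koksma and distortion estimates, identification of the Moebius attractor, hyperbolicity of $\mathcal{R}$ on that attractor, and a gluing argument over the dynamical partitions). One minor correction: you write that ``directions tangent to the attractor are fixed by the conserved quantities (rotation number and break size)''. The break size is indeed conserved by $\mathcal{R}$, but the rotation number is not---it evolves under the Gauss map. The unstable direction on the two-dimensional attractor (for $d=1$) corresponds precisely to varying the rotation number, while the stable direction is transverse to the level sets of the decorated rotation number; this is the picture established in \cite{KhaninYampolsky} and used in \cite{KhaninKocicMazzeo}. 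Your description of the non-uniform hyperbolicity and the origin of the full-measure arithmetic restriction is accurate.
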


In the one-dimensional case, renormalisation can be reduced to the study of the renormalisation operator $\mathcal{R}$ restricted to a two-dimensional parameter space, which allows for an efficient explicit analysis. A difficulty one encounters when trying to extend these results to an arbitrary number of break points is that the dimension of the attractor of renormalisation increases, rendering an explicit description more arduous.

\vspace{2mm}

\paragraph*{\bf Renormalisation and geometry of surfaces} An invertible one-dimensional dynamical system can always be turned into a singular flow on a topological surface, by means of a suspension. This topological remark has in certain cases allowed for fruitful connections between the geometry of surfaces and the renormalisation of associated dynamical systems to be made. A good example of this is the connection between the Gauss map, which is the renormalisation operator for rigid rotations, and the geodesic flow on the modular surface  $\mathbb{H}/ \mathrm{PSL}(2, \mathbb{Z})$, which is the moduli space of elliptic curves/flat tori. This can be generalised to \textit{interval exchange transformations} together with the Rauzy-Veech induction, which is formally equivalent to the \textit{Teichmüller flow} on the moduli space of Abelian differentials on Riemann surfaces. 

Note that these cases are linear dynamical systems and the renormalisation theory is concerned with their ergodic and combinatorial properties. We are going to introduce a similar correspondence in a case which is essentially non-linear, via moduli spaces of representations of surface groups. In our case, renormalisation tells us about \textit{geometric} properties of the associated dynamical systems.

\vspace{2mm}

\paragraph*{\bf Character varieties and renormalisation} 

Given a topological surface $\Sigma$ and a reductive Lie group $G$, one can construct the associated character variety $$\chi(\pi_1 \Sigma, G) $$ which is roughly the space of representations of $\pi_1 \Sigma$ in $G$ up to the action of $G$ by conjugation. These character varieties naturally identify with the space of flat principal $G$-bundle over $\Sigma$. Attiyah and Bott showed in \cite{AtiyahBott} the existence of a natural symplectic form on $\chi(\pi_1 \Sigma, G) $  when $\Sigma$ is closed and orientable. In the particular case $G= \mathrm{PSL}(2, \mathbb{R})$,  this symplectic form was shown by Goldman in \cite{Goldman} to agree with the Weil-Petersson metric on Teichmüller space (which itself identifies with a connected component of $\chi(\pi_1 \Sigma, \mathrm{PSL}(2, \mathbb{R}) ) $). This symplectic form is defined in purely topological term and is therefore invariant by the action of $\mathrm{MCG}(\Sigma)$ the modular group of $\Sigma$. 

In this article we put forward a conceptual approach to the renormalisation of circle diffeomorphisms with $d$ breaks, based on a connection with representations of the fundamental group of a $d$-holed torus into $\mathrm{PSL}(2,\mathbb{R})$. The above material developed in the case of closed surfaces can be extended to surfaces with boundary, and for the theory to unfold nicely one has to work with \textit{relative character varieties}, which are conjugacy classes of representations whose conjugacy classes are fixed for images of boundary components. Our main theorem is

\begin{theorem*}

For all $d \geq 1$, there exists a $2d$ dimensional submanifold of the space of Moebius circle diffeomorphisms with $d$ breaks and fixed size of breaks $\mathbf{c}$ such that, for all $T$ with irrational rotation number and breaks of sizes $\mathbf{c}$, the following holds:

\begin{enumerate}

\item iterated renormalisations of $T$ converge exponentially fast to $\mathcal{E}(\mathbf{c})$;

\item connected components of $\mathcal{E}(\mathbf{c})$ identify with  open sets of the relative character variety $\chi(\Sigma_{1,d}, \mathrm{PSL}(2,\mathbb{R}), \mathbf{h})$ where $\mathbf{h}$ is entirely determined by $\mathbf{c}$;

\item the action of the renormalisation operator $\mathcal{R}$ on $\mathcal{E}(\mathbf{c})$ identifies with a sub-action of the mapping class group $\mathrm{MCG}(\Sigma_{1,d})$.

\end{enumerate}

\end{theorem*}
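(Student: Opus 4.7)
The proof splits naturally into three stages: constructing the invariant family $\mathcal{E}(\mathbf{c})$ and identifying it with the relative character variety, verifying that the renormalisation operator acts on $\mathcal{E}(\mathbf{c})$ by mapping classes, and proving exponential convergence from arbitrary $\mathcal{C}^r$ data. I would address them in this order, since (2) and (3) are essentially algebraic and topological while (1) is the analytic core.

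For the identification, define $\mathcal{E}(\mathbf{c})$ to be the space of orientation-preserving circle homeomorphisms that are piecewise Moebius with exactly $d$ break points of prescribed sizes $\mathbf{c}$. Counting parameters --- three per Moebius piece and $d$ positions of the breaks, constrained by $d$ value-matching conditions and $d$ prescribed size ratios --- yields the expected dimension $3d+d-d-d = 2d$. To identify $\mathcal{E}(\mathbf{c})$ with an open subset of $\chi(\pi_1 \Sigma_{1,d}, \PSL, \mathbf{h})$, I would develop each $T \in \mathcal{E}(\mathbf{c})$ as a piecewise projective self-map of $\pD$ and assign to it a flat $\PSL$-bundle on $\Sigma_{1,d}$ whose holonomy is $T$ along a chosen horizontal loop, trivial along the suspension direction, and prescribed hyperbolic, parabolic or elliptic elements --- with trace an explicit function of $c_j$ --- around each puncture. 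The relation $[a,b] c_1 \cdots c_d = 1$ in $\pi_1 \Sigma_{1,d}$ is enforced automatically by the fact that $T$ closes up into a global homeomorphism of the circle. Conversely, any representation in the appropriate component develops along $\pD$ into a piecewise Moebius circle map, producing the inverse correspondence.

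For the mapping class group action, I would interpret a renormalisation step as a change of fundamental domain for the suspension flow of $T$ over $\Sigma_{1,d}$. The new base interval picks out a new simple loop on the $d$-holed torus; expressing it as a word in the original generators produces an element of $\Mod$ depending only on the combinatorial data of the step --- the continued-fraction coefficient of the rotation number together with the pattern governing how the breaks reshuffle. Iterating produces the claimed sub-action of $\Mod$. Invariance of $\mathcal{E}(\mathbf{c})$ under $\mathcal{R}$ itself is elementary, since first-return maps and compositions of piecewise Moebius maps are again piecewise Moebius, with break sizes preserved by the chain rule for one-sided derivatives.

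The principal obstacle lies in item (1), the exponential convergence. I would adapt the one-break strategy of Khanin and collaborators: first prove uniform real bounds guaranteeing bounded geometry of the dynamical partitions under renormalisation, and then control the distortion of the non-Moebius part. The deviation from projectivity on each continuity arc, conveniently measured by the Schwarzian derivative, should satisfy a cocycle-type inequality under $\mathcal{R}$ whose contraction rate remains uniformly below one once the geometry is bounded. Extending these estimates from one to $d$ breaks forces one to rule out close encounters between orbits of distinct break points and to control how non-Moebius defects on neighbouring intervals interact under renormalisation --- this interaction is the new phenomenon specific to the multi-break case, and I expect it to be the technical heart of the argument. Combining bounded geometry with the hyperbolic contraction of the non-Moebius part then delivers exponential convergence of iterated renormalisations to $\mathcal{E}(\mathbf{c})$ for every $T$ with irrational rotation number and breaks of the prescribed sizes.
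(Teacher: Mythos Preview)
Your treatment of (2) and (3) is in the right spirit but follows a different path from the paper. Rather than a suspension/flat-bundle construction, the paper uses an explicit polygonal model for $\Sigma_{1,d}$: the top and bottom broken lines of a $2(d+1)$-gon encode the permutation, the sides $a_i$ give free generators $\gamma_i$ of $\pi_1\Sigma_{1,d}$, and the representation is defined by sending $\gamma_i$ directly to the Moebius element $f_i$ realising the $i$-th branch of the PIET. Break points are then recovered as fixed points of $\rho(\gamma_i\gamma_{i+1}^{-1})$, break sizes as traces of boundary loops, and each elementary Rauzy step is a cut-and-paste on the polygon, hence a mapping class acting by precomposition $\rho\mapsto\rho\circ\phi_*$. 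Your holonomy picture is equivalent but the paper's is more concrete and avoids bundle language.

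There is a genuine gap in your definition of $\mathcal{E}(\mathbf{c})$ and in item (1). The set of \emph{all} piecewise Moebius maps with breaks of sizes $\mathbf{c}$ --- what the paper calls $\mathbf{P}^{\mathbf{c}}_\sigma$ --- corresponds (locally) to a level set in $\mathrm{Hom}(\pi_1\Sigma_{1,d},\PSL)$, not to the \emph{relative character variety}, which is its quotient by conjugation. The paper's $\mathcal{E}_\sigma(\mathbf{c})$ is a specific slice inside $\mathbf{P}^{\mathbf{c}}_\sigma$, defined by normalising the domain to $[0,1]$ \emph{and} requiring that the distinguished boundary loop be sent into the affine subgroup; only with this extra normalisation does the projection to $\chi(\Sigma_{1,d},\PSL,\mathbf{h})$ become a local diffeomorphism. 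Consequently, convergence of renormalisation has two independent stages: first to $\mathbf{P}$, and second, \emph{within} $\mathbf{P}$, to the slice $\mathcal{E}$. You address only the first. The second follows from the elementary fact that the affine rescaling built into Rauzy induction conjugates the distinguished boundary element by $g_\lambda\colon x\mapsto\lambda x$ with $\lambda\to 0$, and $g_\lambda f g_\lambda^{-1}$ converges to the affine group at rate $\lambda$.

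For convergence to $\mathbf{P}$ itself, the paper's argument is simpler than you anticipate and does not use the Schwarzian. Denjoy--Koksma applied to $\log T'$ gives uniform $\mathcal{C}^1$-bounds on all renormalisations; an explicit chain-rule identity for the second derivative of an $n$-fold composition, combined with disjointness of the dynamical partition, gives uniform $\mathcal{C}^2$-bounds; exponential decay of the mesh of the partition then forces the cross-ratio distortion of each branch to $1$ exponentially fast, which is $\mathcal{C}^0$-closeness to a Moebius map, upgraded to $\mathcal{C}^1$ via the $\mathcal{C}^2$-bounds. All of these estimates are branch-by-branch and insensitive to the number of breaks; the paper explicitly treats the passage from $d=1$ to general $d$ as routine, so the ``interaction between distinct break orbits'' you flag as the technical heart does not in fact arise.
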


\noindent An important corollary of this theorem, implied by fundamental work of Guruprasad-Huebschmann-Jeffrey-Weinstein \cite{GHJW} on relative character varieties, is the following

\begin{corollary*}
The renormalisation operator $\mathcal{R}: \mathcal{E}(\mathbf{c}) \longrightarrow \mathcal{E}(\mathbf{c})$ preserves a symplectic form.
\end{corollary*}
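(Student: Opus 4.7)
The plan is to derive the corollary by transport of structure from parts (2) and (3) of the main theorem, combined with the Guruprasad--Huebschmann--Jeffrey--Weinstein theorem \cite{GHJW}. Fix the natural symplectic form $\omega_{\mathrm{GHJW}}$ on the relative character variety $\chi(\Sigma_{1,d}, \mathrm{PSL}(2,\mathbb{R}), \mathbf{h})$ provided by \cite{GHJW}. Part (2) of the main theorem supplies, on each connected component of $\mathcal{E}(\mathbf{c})$, a diffeomorphism $\Phi$ onto an open subset of this relative character variety. Setting $\omega := \Phi^{\ast} \omega_{\mathrm{GHJW}}$ produces a symplectic form on $\mathcal{E}(\mathbf{c})$, since the pull-back of a symplectic form by a local diffeomorphism is again symplectic.

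Next, part (3) of the main theorem asserts that $\mathcal{R}$ corresponds, under $\Phi$, to a sub-action of $\mathrm{MCG}(\Sigma_{1,d})$: schematically, on each cell of a suitable partition of $\mathcal{E}(\mathbf{c})$ into renormalisation domains, one has $\Phi \circ \mathcal{R} = \phi_{\ast} \circ \Phi$ for some mapping class $\phi \in \mathrm{MCG}(\Sigma_{1,d})$, where $\phi_{\ast}$ denotes the induced action on the character variety. The substance of the GHJW construction is that $\omega_{\mathrm{GHJW}}$ is defined in purely topological terms, hence invariant under the whole mapping class group: $\phi_{\ast}^{\ast} \omega_{\mathrm{GHJW}} = \omega_{\mathrm{GHJW}}$. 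Combining the two identities gives $\mathcal{R}^{\ast} \omega = (\Phi \circ \mathcal{R})^{\ast} \omega_{\mathrm{GHJW}} = \Phi^{\ast} \phi_{\ast}^{\ast} \omega_{\mathrm{GHJW}} = \Phi^{\ast} \omega_{\mathrm{GHJW}} = \omega$, exactly as required.

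The only point that requires care is combinatorial: the renormalisation operator may act by different mapping classes on different renormalisation cells of $\mathcal{E}(\mathbf{c})$, and may permute connected components. One therefore has to verify that the locally defined $\omega$'s on different components glue to a globally well-defined symplectic form and that the compatibility identity $\Phi \circ \mathcal{R} = \phi_{\ast} \circ \Phi$ behaves coherently under these permutations. Because \textit{every} element of $\mathrm{MCG}(\Sigma_{1,d})$ preserves $\omega_{\mathrm{GHJW}}$, this reduces to checking that the identifications $\Phi$ provided by part (2) are mutually compatible between components; this is a routine bookkeeping verification rather than a genuine obstacle, since all the analytic and geometric content has already been absorbed into the main theorem and into \cite{GHJW}.
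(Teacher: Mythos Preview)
Your proposal is correct and follows exactly the same approach as the paper, which simply records the corollary as a ``combination of Theorem~\ref{GHJW} and Subsection~\ref{renormalisation}'': pull back the GHJW symplectic form via the identification of $\mathcal{E}(\mathbf{c})$ with the relative character variety, and use that $\mathcal{R}$ acts through mapping classes, which preserve $\omega_{\mathrm{GHJW}}$. Your write-up is in fact more careful than the paper's one-line justification, spelling out the intertwining identity and the combinatorial bookkeeping across components.
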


\section{Circle diffeormorphisms with breaks}

\noindent In this section we collect standard material on circle diffeomorphisms with breaks.

\begin{definition}

A circle $\mathcal{C}^r$-diffeomorphism with breaks is a circle homeomorphism $T$ satisfying the following two conditions:

\begin{itemize}

\item there are finitely many points $p_1 =0 < \cdots < p_d < 0 = p_1 \in S^1$ such that $T$ is $\mathcal{C}^r$ away from these points;

\item $T$ extends to a $\mathcal{C}^r$-diffeomorphism on every segment of the form $[p_i, p_{i+1}]$; 

\end{itemize}

\noindent The points $(p_i)$ are called the \textit{break points} of $T$. For each $p_i$, the ratio $\frac{T'(p_i^+)}{T'(p_i^-)}$ is called the \textit{size of the break}. Here, $T'(p_i^+)$ and $T'(p_i^-)$ respectively denote the derivative of $T$ at $p_i$ on the right and on the left.

\end{definition}

\noindent In this article, \textbf{we make the standing assumption that} $r > 2$.

\vspace{3mm}

\paragraph{\bf Rotation number and decorated rotation number}

For $T$ a circle homeomorphism we denote by $\rho(T) \in S^1$ its rotation number. We have the following theorem due to Denjoy 

\begin{theorem}[Denjoy]

Let $T$ be a circle homeomorphism which is differentiable away from finitely many points and assume that its derivative has bounded variation. Assume further that $\rho(T)$ is irrational. Then $T$ is minimal and is topologically conjugate to the rotation of angle $\rho(T)$. Furthermore, the conjugating map is unique up to composition by a rigid rotation.

\end{theorem}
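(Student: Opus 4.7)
My plan is to follow the classical Denjoy strategy, verifying that the bounded variation hypothesis together with the finiteness of the break set suffices for its key steps.

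First I would prove that $T$ has no wandering intervals. Suppose for contradiction that $I \subset S^1$ is wandering, so that $\{T^n(I)\}_{n \in \mathbb{Z}}$ are pairwise disjoint. Choosing $n_k \to \infty$ along denominators of continued-fraction convergents of $\rho(T)$, the arithmetic of irrational rotations guarantees that $\{T^j(I) : 0 \le j < n_k\}$ is pairwise disjoint, and the core Denjoy estimate controls the distortion via
\[
\log \frac{(T^{n_k})'(x)}{(T^{n_k})'(y)} \;=\; \sum_{j=0}^{n_k-1} \bigl( \log T'(T^j x) - \log T'(T^j y) \bigr),
\]
the right-hand side being bounded by the total variation of $\log T'$ over the union of the disjoint intervals $T^j([x,y])$. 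Since $T'$ is bounded above and below by positive constants (having bounded variation and $T$ being a homeomorphism between the segments determined by the break points), $\log T'$ itself has bounded variation on $S^1$: each of the finitely many breaks contributes only the finite jump $|\log c_i|$ (with $c_i$ the size of the $i$-th break). Disjointness of the iterates, combined with $\sum_n |T^n(I)| \le 1$ and the standard comparison between $|T^{n_k}(I)|$ and $|T^{n_k+1}(I)|$, then yields the classical contradiction.

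Once wandering intervals are ruled out, I would invoke Poincaré's classification: for any orientation-preserving circle homeomorphism with irrational rotation number, there exists a continuous non-decreasing degree-one map $h : S^1 \to S^1$ with $h \circ T = R_{\rho(T)} \circ h$, where $R_{\rho(T)}$ denotes the rigid rotation of angle $\rho(T)$. Intervals of constancy of $h$ correspond exactly to wandering intervals of $T$; with these absent, $h$ is a homeomorphism, and minimality of $T$ follows at once from the density of every $R_{\rho(T)}$-orbit. For uniqueness, if $h_1, h_2$ are two such conjugacies, then $\phi := h_2 \circ h_1^{-1}$ commutes with the irrational rotation $R_{\rho(T)}$; a short density argument shows that the centraliser of an irrational rotation in $\mathrm{Homeo}^+(S^1)$ is exactly the group of rigid rotations, which gives $h_2 = R_\theta \circ h_1$ for some $\theta$.

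I expect the main obstacle to be the no-wandering-intervals step, the rest being standard Poincaré-style bookkeeping. That obstacle is, however, essentially cosmetic in our setting: each of the $d$ breaks contributes only a finite jump to the total variation of $\log T'$, so the classical Denjoy distortion argument applies verbatim once this observation is in place.
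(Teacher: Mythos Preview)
The paper does not prove this theorem: it is stated as a classical background result attributed to Denjoy, with no proof environment following the statement (the text simply continues ``Denjoy's theorem in particular applies to circle diffeomorphisms with breaks''). So there is nothing in the paper to compare your argument against.

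That said, your sketch is the standard Denjoy route and is essentially correct. The one point worth tightening is the no-wandering-interval step: you should make explicit that the contradiction comes from combining the two-sided distortion bound (for both $T^{q_n}$ and $T^{-q_n}$) with $|T^{\pm q_n}(I)|\to 0$, rather than the somewhat vague ``comparison between $|T^{n_k}(I)|$ and $|T^{n_k+1}(I)|$''. Also, when iterates $T^j(I)$ straddle a break point the chain-rule formula for $\log(T^{n_k})'$ needs a word of care, but as you note this is harmless because the finitely many jumps of $\log T'$ are already absorbed into its total variation. The Poincar\'e semiconjugacy step and the centraliser argument for uniqueness are correct as written.
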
  

\noindent Denjoy's theorem in particular applies to circle diffeomorphisms with breaks. If $T$ is a diffeomorphism with breaks with irrational rotation number, there is a unique map conjugating it to its linear model which maps $p_1$ to $0$. The \textit{decorated rotation number} of $T$ is in that case the data of $\rho$ together with the images in $S^1$ by the conjugating map of $p_2, \cdots,  p_{d-1}$ and $p_d$. 

\vspace{3mm}

\paragraph{\bf Continued fraction algorithm}

\noindent The \textit{continued fraction algorithm} is a procedure which allows to find the best rational approximations of a given irrational number. We follow here \cite{Herman1}.  Let $G := x \mapsto \frac{1}{x}$ if $x \neq 0$ and let $G(0) = 0$, $G$ is called the \textit{Gauss map}. Define $a(x) = [\frac{1}{x}]$ where $[ \cdot ]$ denotes the integral part. Given $\alpha \in [0,1$, we define the sequence $a_1, \cdots, a_n, \cdots$ by

$$ a_i = a(G^i(\alpha)).$$ We use the notation 

$$ [a_1, \cdots, a_n] = \frac{p_n}{q_n} = \frac{1}{a_1 + \frac{1}{a_2 + \frac{1}{\cdots + \frac{1}{a_n}}}} $$ The numbers $\frac{p_n}{q_n}$ are good rational approximations of $\alpha$ in the sense that they satisfy $| \alpha - \frac{p_n}{q_n} | < \frac{1}{q_n^2}$. Diophantine approximation is a topic in its own right and we do not intend to say anything more about it. We just want to insist upon the fact that given a circle homeomorphism $T$ of rotation number $\alpha \notin \mathbb{Q}$, the iterations $T^{q_n}$ will play an important in the theory, as it is illustrated by the next result.

\vspace{3mm}

\paragraph{\bf The Denjoy-Koksma inequality}

The Denjoy-Koksma inequality is a remarkable result on the ergodic theory of circle homeomorphism. Recall that any circle homeomorphism of irrational rotation number is uniquely ergodic.

\begin{theorem}[Denjoy-Koksma inequality]
\label{DK}
Let $\alpha$ be an irrational rotation number. Let $T$ be a circle homeomorphism of rotation number $\alpha$ and let $\mu$ be the unique invariant measure of $T$. Let $f : S^1 \longrightarrow \mathbb{R}$ be a measurable function of bounded variation. Assume further that $\int{fd\mu} = 0$. Then, for all $n \in \mathbb{N}^*$ and for all $x \in S^1$

$$  | \sum_{i=0}^{q_n-1}{f \circ T^k(x)} | \leq \mathrm{Var}(f) $$ where $\mathrm{Var}(f)$ is the total variation of $f$.
\end{theorem}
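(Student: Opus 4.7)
The plan is to reduce the inequality to the case of a rigid rotation via Denjoy's theorem, and then establish it in that case through a partition argument exploiting the fine orbit structure of $R_\alpha$ after $q_n$ iterates.

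For the reduction, Denjoy's theorem (Theorem 2 above) provides an orientation-preserving homeomorphism $h: S^1 \to S^1$ with $T = h^{-1} \circ R_\alpha \circ h$, so that the unique invariant measure satisfies $\mu = h^* \mathrm{Leb}$. Setting $g := f \circ h^{-1}$, one has $\int g \, d\mathrm{Leb} = \int f \, d\mu = 0$ and $\mathrm{Var}(g) = \mathrm{Var}(f)$, since total variation on the circle is invariant under orientation-preserving reparametrization. Since $f \circ T^k = g \circ R_\alpha^k \circ h$, the Birkhoff sums satisfy $\sum_{k=0}^{q_n-1} f(T^k(x)) = \sum_{k=0}^{q_n-1} g(R_\alpha^k(h(x)))$, reducing the problem to the rotation $R_\alpha$ and the zero-mean BV function $g$.

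Next, I would invoke the classical orbit structure coming from continued fractions: setting $y := h(x)$, the $q_n$ points $\{y + k\alpha : 0 \leq k < q_n\}$ partition $S^1$ into arcs $I_j$ whose lengths take at most two distinct values, both bounded above by $2/q_n$. Equivalently, this encodes an Ostrowski--Rokhlin tower decomposition of height $q_n$. Re-indexing the orbit in cyclic order as $z_0 < z_1 < \ldots < z_{q_n - 1}$ with $z_j$ the left endpoint of $I_j$, the Birkhoff sum becomes $\sum_{j=0}^{q_n - 1} g(z_j)$. The BV property of $g$ gives $|g(z_j) - g(x)| \leq \mathrm{Var}_{I_j}(g)$ for all $x \in I_j$; combined with $\sum_j \int_{I_j} g = \int g = 0$ and an Abel-type rearrangement exploiting the two-valued gap spectrum, one obtains
$$\left| \sum_{j=0}^{q_n - 1} g(z_j) \right| \leq \sum_{j=0}^{q_n - 1} \mathrm{Var}_{I_j}(g) \leq \mathrm{Var}(g),$$
where the last inequality is additivity of variation over a partition of $S^1$.

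The main obstacle is this final comparison: the arc-wise BV bound straightforwardly controls the length-weighted Riemann-type sum $\sum_j \ell_j g(z_j)$, whereas the Denjoy--Koksma inequality concerns the unweighted orbit sum. Bridging this gap is where the specifically two-valued nature of the gap spectrum for $q_n$-th iterates --- a genuinely arithmetic feature of continued fraction denominators, failing for generic $n$ --- becomes essential.
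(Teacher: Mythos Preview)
The paper does not actually prove this theorem; it simply refers the reader to Herman \cite{Herman1}, Chapitre VI.3. So there is no ``paper's own proof'' to compare against beyond the standard argument found there. Your outline is in the right spirit but contains one genuine gap and one minor over-assumption.

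\medskip

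\textbf{Minor point.} Your reduction invokes Denjoy's theorem, which in the paper's formulation requires $T$ to be differentiable with derivative of bounded variation. The Denjoy--Koksma inequality as stated is for an arbitrary circle homeomorphism of irrational rotation number. The reduction you want uses only the Poincar\'e semi-conjugacy $h$ (monotone, continuous, $h\circ T = R_\alpha\circ h$, $h_*\mu=\mathrm{Leb}$), which always exists; Denjoy's theorem is both stronger than needed and not always available.

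\medskip

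\textbf{The real gap.} You correctly note that the arc-wise BV bound gives, for each arc $I_j$ of the orbit partition with representative orbit point $z_j$,
\[
\Bigl| g(z_j) - \tfrac{1}{|I_j|}\int_{I_j} g \Bigr| \le \mathrm{Var}_{I_j}(g),
\]
and hence $\bigl|\sum_j g(z_j) - \sum_j \tfrac{1}{|I_j|}\int_{I_j} g\bigr|\le \mathrm{Var}(g)$. The problem, which you yourself flag, is that $\sum_j \tfrac{1}{|I_j|}\int_{I_j} g$ has no reason to vanish; what vanishes is $\sum_j \int_{I_j} g=\int g$. You then gesture at the three-gap theorem (two-valued spectrum of $|I_j|$) and an ``Abel-type rearrangement'' as the bridge. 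This is not the correct mechanism, and I do not see how to make it work: with two lengths $\ell,\ell'$ one gets $\sum_j \tfrac{1}{|I_j|}\int_{I_j} g = (\tfrac{1}{\ell}-\tfrac{1}{\ell'})A$ for $A=\sum_{|I_j|=\ell}\int_{I_j}g$, and there is no control on $A$.

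The standard argument (Herman's) avoids this entirely by using a \emph{different} partition. The key combinatorial lemma is that the $q_n$ orbit points $\{y+k\alpha:0\le k<q_n\}$ and the $q_n$ grid points $\{j/q_n:0\le j<q_n\}$ \emph{interlace} on $S^1$; this follows from $|k\alpha-kp_n/q_n|<1/q_n$ for $0\le k<q_n$ together with $\gcd(p_n,q_n)=1$. Pulling back via $h$, one obtains arcs $J_0,\dots,J_{q_n-1}$ each of $\mu$-measure exactly $1/q_n$ and each containing exactly one orbit point $T^{i_j}(x)$. Now the averages are $q_n\int_{J_j} f\,d\mu$, their sum is $q_n\int f\,d\mu=0$, and the BV bound on each $J_j$ gives
\[
\Bigl|\sum_{k=0}^{q_n-1} f(T^k x)\Bigr| = \Bigl|\sum_j\bigl(f(T^{i_j}x)-q_n\!\int_{J_j}\! f\,d\mu\bigr)\Bigr|\le \sum_j \mathrm{Var}_{J_j}(f)\le \mathrm{Var}(f).
\]
So the missing idea is not the two-gap spectrum but the interlacing with the equal-measure grid; once you have arcs of \emph{equal} $\mu$-measure, the weighting issue disappears.
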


\noindent The interested reader will find the proof of this statement in \cite{Herman1}, Chapitre VI.3.

\vspace{3mm}

\paragraph{\bf Distortion bounds} This paragraph is dedicated to proving the following result.

\begin{lemma}
\label{bound1}
Let $T$ be a circle diffeomorphim with breaks Let $J \subset [0,1]$ be an interval such that $J, T(J), T^2(J), \cdots, T^n(J)$ are pairwise disjoint and do not contain break points of $T$. Then for all $x,y \in J$ we have 

$$ \frac{\mathrm{D}(T^n)(x)}{\mathrm{D}(T^n)(y)} \leq \exp(\int_0^1{|\mathrm{D} \log \mathrm{D}T|\mathrm{dLeb}}). $$

\end{lemma}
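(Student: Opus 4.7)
\bigskip

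\noindent\textbf{Proof plan for Lemma \ref{bound1}.}

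The strategy is the classical Koebe-type distortion argument, using the chain rule and the fact that the orbit intervals $T^k(J)$ are pairwise disjoint. First I would write, using the chain rule,
$$ \log \mathrm{D}T^n(x) - \log \mathrm{D}T^n(y) = \sum_{k=0}^{n-1} \Bigl( \log \mathrm{D}T(T^k(x)) - \log \mathrm{D}T(T^k(y)) \Bigr). $$
The hypothesis that each $T^k(J)$ ($0\leq k\leq n-1$) is an interval containing no break points of $T$, combined with $r>2$, ensures that $\log \mathrm{D}T$ is $\mathcal{C}^1$ on each $T^k(J)$, so the composition $T^n$ is genuinely differentiable at $x$ and $y$ and the formula above makes sense.

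Next, since $T^k$ is a homeomorphism and $J$ is an interval, each $T^k(J)$ is an interval containing both $T^k(x)$ and $T^k(y)$. By the fundamental theorem of calculus applied on the subinterval between these two points, I obtain
$$ \bigl| \log \mathrm{D}T(T^k(x)) - \log \mathrm{D}T(T^k(y)) \bigr| \; \leq \; \int_{T^k(J)} |\mathrm{D}\log \mathrm{D}T|\,\mathrm{dLeb}. $$
Summing over $k=0,\dots,n-1$ and using that the intervals $T^k(J)$ are pairwise disjoint subsets of $[0,1]$, the sum of these integrals is bounded by $\int_0^1 |\mathrm{D}\log \mathrm{D}T|\,\mathrm{dLeb}$. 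Exponentiating yields the claimed bound (with an absolute value in the exponent, which in particular gives the one-sided inequality stated).

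There is no serious obstacle here: the only delicate point is checking that the hypothesis on the orbit of $J$ indeed justifies piecewise smoothness of $\log\mathrm{D}T$ on each $T^k(J)$ and disjointness in the summation step, both of which are built into the statement. The assumption $r>2$ is slightly stronger than what is needed for this particular lemma (bounded variation of $\log \mathrm{D}T$ would suffice), but it will be convenient in subsequent steps of the paper.
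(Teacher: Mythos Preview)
Your argument is correct and follows essentially the same route as the paper: chain rule to decompose $\log \mathrm{D}T^n(x)-\log \mathrm{D}T^n(y)$ as a sum over the orbit, fundamental theorem of calculus on each $T^k(J)$, disjointness to bound the total by $\int_0^1|\mathrm{D}\log\mathrm{D}T|\,\mathrm{dLeb}$, then exponentiate. If anything, your version is slightly more careful in explicitly noting why $\log\mathrm{D}T$ is differentiable on each $T^k(J)$.
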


\begin{proof}

The proof is classical. We have that $$ \log \mathrm{D}T^n(x) = \sum_{i=0}^{n-1}{\log \mathrm{D}T (T^i(x) )} $$ and therefore 

$$ | \log \mathrm{D}T^n(x) - \log \mathrm{D}T^n(y) | \leq  \sum_{i=0}^{n-1}{|\log \mathrm{D}T (T^i(x) ) -  \log \mathrm{D}T (T^i(x) ) |}  \leq  \sum_{i=0}^{n-1}{|\int_{T^i(y)}^{T^i(x)}{\eta_{T}}|}.$$  Since the intervals $[T^i(y), T^i(x)]$ are pairwise disjoints we get 

$$ | \log \mathrm{D}T^n(x) - \log \mathrm{D}T^n(y) | \leq \int_0^1{|\mathrm{D} \log \mathrm{D}T|\mathrm{dLeb}} $$ and exponentiating gives the expected result.

\end{proof}

\section{Rauzy induction for generalised interval exchange transformations}

\noindent The formalism of \textit{generalised interval transformations} is convenient to define the renormalisation scheme we will need for circle diffeomorphisms with breaks. In this section we introduce both this formalism and the renormalisation scheme, and explain how these relate to circle diffeomorphisms with breaks.

\subsection{Generalised interval exchange transformations and Rauzy induction}

In this subsection, we introduce some notation. 

\begin{itemize}

\item  $\mathcal{A}$ is an alphabet on $d$ letters/symbols.  

\item A \textit{marked permutation} is the datum of two bijections $\pi = (\pi_t, \pi_b)$ between the sets $\{1, \ldots, d \}$ and $\mathcal{A}$. Here $t$ stands for \textit{top} and $b$ stands for \textit{bottom}.

\item Let $0 < u_1^t < \ldots < u_{d-1}^t < 1$ and   $0 < u_1^b < \ldots < u_{d-1}^b < 1$. For all $i \in \{1, \ldots, d \}$, denote by $I_{\pi_t(i)}$ the interval $[u_{i-1}^t, u_i^t[$ and by  $I_{\pi_b(i)}$ the interval $[u_{i-1}^b, u_i^b[$.

\end{itemize}

\begin{definition}

A $\mathcal{C}^r$-\textit{generalised interval exchange transformation} (GIET) is a bijection of $[0,1[$ which satisfies the following conditions:
 
 \begin{enumerate}
 
\item  for any $\alpha \in \A$, it maps $I_{\alpha}^t$ onto $I_{\alpha}^b$;

\item for any $\alpha \in \A$, it is $\mathcal{C}^{r}$ diffeomorphism between,  $I_{\alpha}^t$ and $I_{\alpha}^b$ ;

\item for any $\alpha \in \A$, it extends to the closure of $I_{\alpha}^t$ , is derivable at the end points of $I_{\alpha}^t$ and the derivative is positive. 
 
 \end{enumerate}

\noindent The points $0 < u_1^t < \ldots < u_{d-1}^t < 1$ are called the \textit{singularities} of the GIET. 

\end{definition}

\noindent Note that there is a natural way to associate a GIET to $f$ a circle diffeomorphism with breaks. By arbitrarily choosing one of the break $b$, one can "cut" the circle  at this break to get an identification of $S^1 \setminus \{b\}$ with $]0,1[$. The map induced by $f$ on $]0,1[$ extends to a GIET whose singularities are exactly the images of the break points under this identifications.

\noindent We define hereafter an algorithm on GIETs which will provide us with a renormalisation scheme for GIETs and circle diffeomorphisms with breaks in particular.

\vspace{3mm}

\paragraph*{\bf Elementary step of Rauzy induction}

The Rauzy induction consists in taking a first return map on a well-chosen subinterval. Consider $T$ a GIET. One should think of the points $u_{\pi(d-1)}^t$ and $u_{\pi(d-1)}^b$ as the rightmost singularities at the top and the bottom respectively. 

\noindent The \textit{elementary step} of Rauzy induction consists in taking the first return map of $T$ on $[0,u_{\pi(d-1)}^t[$ if $u_{\pi(d-1)}^t > u_{\pi(d-1)}^b$ or on $[0,u_{\pi(d-1)}^b[$ if $u_{\pi(d-1)}^b > u_{\pi(d-1)}^t$. After affinely rescaling to $[0,1]$, this operation returns $E(T)$ a new GIET with a different marked permutation $\pi'$. We make the following observation: $\pi'$ only depends on $\pi$ and on whether $u_{\pi(d-1)}^t > u_{\pi(d-1)}^b$ or $u_{\pi(d-1)}^b > u_{\pi(d-1)}^t$. These two cases will be referred to as "top wins" and "bottom wins" respectively. In each case, the letter corresponding to the longest interval ( $\pi_t(d-1)$ if top wins and $\pi_b(d-1)$ if bottom wins) is called the \textit{winner}.

\vspace{3mm}

\subsection{Rauzy diagram, Rauzy classes and Rauzy paths}

A \textit{Rauzy diagram} is a graph whose set of vertices are marked permutations on $d$ intervals and oriented arrows between two vertices $\pi$ and $\pi'$ if $\pi'$ is obtained from $\pi'$ after an elementary step of Rauzy induction. 
\noindent A \textit{Rauzy class} is a connected component of a Rauzy diagram. One will find examples and detailed discussions in \cite{Yoccoz2}.

\noindent A generalised interval exchange transformation $T$ is said to be \textit{infinitely renormalisable} if $E^n(T)$ is well-defined for all $n \geq 0$. Such an infinitely renormalisable GIET defines an admissible path in the Rauzy diagram called its \textit{Rauzy path} and which we will denote by $\gamma(T)$. Finally, a Rauzy path $\gamma$ is called $\infty$\textit{-complete} if every letter of $\mathcal{A}$ is a winner at a step of the induction infinitely many times along $\gamma$ (which does not necessarily mean that $\gamma$ visits all vertices of the Rauzy classes it lives in).

\subsection{Circular Rauzy classes and circle diffeomorphisms with breaks.}

As sketched above, a circle diffeomorphism with $d-1$ breaks can be turned into a generalised IET the following way. Choose a break $p_0 \in S^1$ and identify $S^1 \setminus \{ p_0 \} $ with $]0,1[$. The induced map extends to $[0,1]$ to a GIET on $d$ intervals. Note that there are different ways to perform this operation, all corresponding to the choice of a break point. At any rate, the associated permutation $\pi$ will be circular. However, all permutations contained in the Rauzy class of a circular permutation are not circular. We have the following Proposition:

\begin{proposition}

The following statements hold true.
\begin{enumerate}

\item A GIET is infinitely renormalisable if and only if its singular points lies on different orbits.

\item A GIET induced by a circle diffeomorphism with breaks has $\infty$-complete Rauzy path if and only if its rotation number is irrational.

\item Two minimal circle diffeomorphisms with breaks have same decorated rotation number if and only if their induced GIETs have same Rauzy paths.
\end{enumerate}
\end{proposition}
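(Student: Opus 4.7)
The plan is to handle the three statements separately, with part (1) being the technical heart: parts (2) and (3) will then follow from combining (1) with Denjoy's theorem and the standard theory of Rauzy induction for rotations.

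For part (1), the key observation is that the singularities of $E^n(T)$, once transported back to $[0,1]$ via the affine rescalings, are precisely forward or backward $T$-iterates of the original singularities $u^t_1, \ldots, u^t_{d-1}$. More precisely, one elementary step of Rauzy induction replaces one singularity by its $T$-image (when bottom wins) or by its $T$-preimage (when top wins), while the others are unchanged. I would first verify this orbit-iterate description by induction on the number of Rauzy steps. An elementary step becomes undefined exactly when, at some renormalisation level, the rightmost top singularity and the rightmost bottom singularity coincide; translating this coincidence back to the original map via the orbit-iterate description yields an equality of two $T$-iterates of distinct original singularities, i.e. these two original singularities lie on the same $T$-orbit. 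Both directions of (1) follow at once.

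For part (2), let $T$ be a circle diffeomorphism with breaks with associated GIET $\tilde{T}$ of circular permutation. If $\rho(T) \in \mathbb{Q}$, then $\tilde{T}$ has periodic orbits, and inside some interval the induction stabilises; the winner cannot rotate through all letters, so $\infty$-completeness fails. If $\rho(T) \notin \mathbb{Q}$, Denjoy's theorem gives minimality and a topological conjugacy to the rotation by $\rho(T)$. Under this conjugacy the Rauzy induction of $\tilde{T}$ runs in lockstep, at the combinatorial level, with the Rauzy induction of the pure rotation decorated by the images of the breaks; for the pure rotation, $\infty$-completeness is equivalent to irrationality of the rotation number via the standard identification with the continued fraction algorithm, so the same holds here. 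Infinite renormalisability in the irrational case is ensured by part (1), since distinct break points project to singularities lying on distinct orbits of the minimal system.

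For part (3), one notes that the Rauzy path is a purely combinatorial invariant of the topological conjugacy class of the induced rotation-with-marked-points. If two circle diffeomorphisms with breaks have the same decorated rotation number, then, by Denjoy, both are conjugate to the same rotation with the same marked images of their breaks; the first-return combinatorics at each level agree, so the Rauzy paths coincide. Conversely, from the Rauzy path one reconstructs the rotation number as the irrational encoded by the associated continued fraction expansion, and the positions of the break images are reconstructed from the order and multiplicity with which each letter becomes the winner along the path.

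The main obstacle I expect is the careful bookkeeping in part (1), i.e. formulating and verifying the inductive description of the singularities of $E^n(T)$ as $T$-iterates of the original singularities in a way robust enough that the collision criterion for failure of induction translates cleanly into the orbit-sharing condition. Once this is done, parts (2) and (3) are consequences of Denjoy's theorem together with the standard dictionary between Rauzy induction and the continued fraction algorithm.
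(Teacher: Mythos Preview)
The paper does not give its own proof of this proposition: it simply writes ``For the proof of this rather elementary result, we refer to \cite{Yoccoz3}.'' Your sketch is precisely the standard argument one finds in Yoccoz's notes---tracking singularities as $T$-iterates through Rauzy steps for (1), then transporting the combinatorics to the rigid rotation via Denjoy's conjugacy for (2) and (3)---so in that sense you are aligned with the intended (cited) proof.

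One point to tighten in your treatment of (2): you write that ``distinct break points project to singularities lying on distinct orbits of the minimal system,'' but irrationality of $\rho(T)$ alone does not rule out two break points lying on the same $T$-orbit (equivalently, their images under the Denjoy conjugacy differing by a multiple of $\rho$). If that happens, part (1) forces the induction to halt in finitely many steps, so the Rauzy path cannot be $\infty$-complete even though the rotation number is irrational. This is really a looseness in the paper's own formulation of (2)---an implicit genericity assumption on the decorated rotation number---rather than a flaw in your strategy, but you should flag the hypothesis explicitly rather than assert it as a consequence of minimality.
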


\noindent For the proof of this rather elementary result, we refer to \cite{Yoccoz3}.

\vspace{2mm}

\paragraph{\bf Projection to $2$-GIETs and acceleration of the induction} Another important remark at this point is that a $\mathcal{C}^r$-GIET with $d$ intervals whose permutation is circular induces a $\mathcal{C}^0$-GIET on two intervals (by grouping together adjacent intervals mapped to adjacent intervals). Rauzy induction is also well-defined for such $2$-GIETs and defines an algorithm which is formally different from the Rauzy induction on the same GIET thought as a $d$-GIET. These two are nonetheless related the following way: the algorithm on two intervals is an \textit{acceleration} of the algorithm on $d$ intervals. Formally this means that for any renormalisable GIET there exists an integer $k$ such that applying $k$ steps of the Rauzy induction on $d$ intervals produces the same outcome as one application of the Rauzy induction on two  intervals. It is important for us to make this distinction as we want to keep track of the combinatorial structure of the orbits of the breaks points.

\vspace{2mm}

\paragraph{\bf Standard renormalisation for circle homeomorphisms}

As just explained above, circle homeomorphism can be turned into $2$-GIETs. There exists several equivalent definitions of renormalisation for such maps. We explain here the one which uses Rauzy induction. 
\noindent The definition consists in grouping elementary steps of the Rauzy induction the following way.

\begin{itemize}

\item If top is the winner, apply $E$ as long as top keeps winning. When bottom eventually wins, apply $E$, apply $E$ once more. 

\item If bottom is the winner, apply $E$ as long as bottom keeps winning. When top eventually wins, apply $E$, apply $E$ once more. 

\end{itemize}

\noindent Note that it can be the case that this algorithm gets stuck in a loop and does not terminate. When the algorithm terminates, we can define $\mathcal{R}$ the standard renormalisation for circle homeomorphism. We have the following Proposition

\begin{proposition}

Let $T$ be a $2$-GIET corresponding to a circle homeomorphism of irrational rotation number. Then $\mathcal{R}^nT$ is well-defined for all $n \geq 0$. 
\end{proposition}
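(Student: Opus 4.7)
The plan is to show that the well-definedness of $\mathcal{R}^n T$ reduces to two sub-claims: (a) every elementary Rauzy iterate $E^k T$ is defined, and (b) within the infinite sequence $(E^k T)_{k \geq 0}$, the winning side changes infinitely many times. Together these ensure that each application of $\mathcal{R}$ collects only finitely many elementary Rauzy steps, so the renormalisation is well-defined at every iteration.

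For claim (a), the only way $E^{k+1} T$ could fail to exist is if a singularity of $E^k T$ were a fixed point of $E^k T$, making the first return map used to define the next Rauzy step ill-defined. Since $T$ is a circle homeomorphism with irrational rotation number, Denjoy's theorem gives that $T$ is minimal and topologically conjugate to the rotation $R_{\rho(T)}$; no periodic orbit ever appears, so $E^k T$ is defined for all $k \geq 0$.

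For claim (b), the crucial observation is that elementary Rauzy induction on $2$-GIETs coincides with the continued fraction algorithm applied to the rotation number. I would first verify this on the model case of the rotation $R_\alpha$ by a direct computation: the initial block of consecutive wins by a single side has length exactly $a_1(\alpha)$, and the induced $2$-GIET on the shorter interval, after affine renormalisation, is again a rotation whose angle is obtained from $\alpha$ by one application of the Gauss map $G$. The combinatorial record of ``top wins''/``bottom wins'' along the induction is a topological conjugacy invariant of the underlying circle homeomorphism, so the block structure for $T$ coincides with that for its linear model $R_{\rho(T)}$. The $n$-th block thus has length $a_n(\rho(T))$, a finite positive integer because $\rho(T)$ is irrational. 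Therefore the winning side changes after finitely many steps and $\mathcal{R} T$ is well-defined; furthermore $\rho(\mathcal{R} T) = G^j(\rho(T))$ for the appropriate $j \geq 1$, which is again irrational, and the argument iterates to give $\mathcal{R}^n T$ for all $n \geq 0$.

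The main delicate point I anticipate is the combinatorial identification in claim (b), namely matching the number of consecutive wins of one side exactly with a partial quotient $a_n(\rho(T))$ and checking that the grouping prescribed in the definition of $\mathcal{R}$ (the last ``apply $E$, apply $E$ once more'' steps) fits into this correspondence without off-by-one issues. This is elementary but careful bookkeeping for the rigid rotation; once established in that linear case, the passage to a general $2$-GIET $T$ with irrational rotation number is automatic by the conjugacy invariance of the Rauzy combinatorics together with Denjoy's theorem.
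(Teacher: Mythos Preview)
The paper does not actually prove this proposition: it simply declares the statement ``standard'' and refers the reader to Yoccoz. Your sketch therefore supplies strictly more than the paper does, and its two-step strategy (infinite renormalisability of the elementary step, then finiteness of each block of consecutive wins via the continued fraction) is the right one and is indeed what one finds in the cited source.

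Two small points are worth tightening. First, in claim~(a) the failure mode of the elementary Rauzy step on a $2$-GIET is not that a singularity is a fixed point, but that the top and bottom singularities coincide, $u_1^t = u_1^b$; unwinding the induction, this happens at some stage precisely when the marked point $0$ is periodic under the original circle map, which is excluded by irrationality of $\rho(T)$. Second, for both claims you invoke Denjoy's theorem to obtain a topological conjugacy to $R_{\rho(T)}$, but Denjoy requires a derivative of bounded variation and the proposition is stated for a bare circle homeomorphism. In fact no conjugacy is needed: by Poincar\'e's classical result, the cyclic order of the orbit of $0$ under any circle homeomorphism is the same as for the rigid rotation of the same rotation number, and this order data alone determines the top/bottom winning pattern of the Rauzy induction. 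With these adjustments your argument goes through for arbitrary homeomorphisms, as the proposition requires.
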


\noindent This is a standard fact about circle homeomorphisms and generalised interval exchange transformations. The interested reader will find proofs and further discussions in \cite{Yoccoz3}.

\noindent We now explain how renormalisation relates to the continued fraction expansion of $\rho(T)$. Assume $T$ is a $2$-GIET such that $\rho(T)$ is irrational. Denote by $k_n \geq 2$ the integer such that $\mathcal{R}^n(\mathcal{R}^{n-1}T) = E^{k_n}(\mathcal{R}^{n-1}T)$. Let $(a_n)_{n\in\mathbb{N}^*}$ the sequence of positive integers such that 

$$ \rho(T) = \frac{1}{1 + \frac{a_1}{1+\frac{a_2}{1 + \cdots}}}.$$ We then we have  $$ \forall n \geq 1, \ a_n = k_n -1.$$

\begin{remark}

Here $\mathcal{R}$ is defined using renormalisations for $2$-GIETs. However, $\mathcal{R}$ is an operator acting on $d$-GIET. In particular, two $d$-GIET $T_1$ and $T_2$ may have different Rauzy paths when there associated $2$-GIETs have same Rauzy paths. It is the case when $T_1$ and $T_2$ having same rotation number  but their decorated rotation numbers differ.

\end{remark}

\subsection{Dynamical partitions}

Let $T$ be a GIET and assume further that $T$ is $n$ times renormalisable. For any $n \geq 0$, $\mathcal{R}^nT$ is the rescaling of a first return map of $T$ on an interval of the form $[0,x_n]$. The interval $[0,x_n]$ is partitioned into 

$$ [0,x_n] = \cup_{\alpha \in \mathcal{A}}^d{I^t_{\alpha}(n)} $$ and $\mathcal{R}^nT$ rescaled down to $[0,x_n]$ is equal to $T^{l^n_{\alpha}}$ on each of the $I^t_{\alpha}(n)$. For $\alpha \in \mathcal{A}$, we introduce

$$ \mathcal{P}^n_{\alpha} = \{ I^j_n, T(I^t_{\alpha}(n)),  T^2(I^t_{\alpha}(n)), \cdots, T^{l^n_{\alpha}-1}(I^t_{\alpha}(n))  \} $$ and we call 

$$ \mathcal{P}_n = \bigcup_{\alpha \in \mathcal{A}}{\mathcal{P}^n_{\alpha}} $$ the dynamical partition of level $n$. One easily verifies that $\mathcal{P}_n$ is a partition of $[0,1]$ into subintervals.

\section{Fast convergence of renormalisation towards Moëbius maps}

\noindent We fix $d \in \mathbb{N}^*$ and $\mathbf{c} \in \mathbb{R}_+^d$ a break profile. All constants will be implicitly assumed to only depend on $c$. In this section we prove the following theorem (which is a straightforward generalisation of a well-known fact for the case $d=1$ and should not be considered as new material). In the sequel $\mathbf{P}$ denotes the set of generalised IETs with circular permuations which are Moebius maps restricted to their branches.

\begin{theorem}
\label{convergenceMoebius}
There there exists $\rho < 1$ such that the following holds.
Let $T$ be circle diffeomorphism with $d-1$ break points whose rotation number is irrational. Then there exists $C_T >0$ such that  $$ d_1(\mathcal{R}^nT, \mathbf{P}) \leq C_T \rho^n.$$
\end{theorem}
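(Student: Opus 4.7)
The approach is to estimate the Schwarzian derivative $S(\mathcal{R}^n T)$ on each of its continuity intervals and show it decays exponentially in $n$. A piecewise Moebius GIET is exactly one whose Schwarzian vanishes on each branch, and on a fixed interval with fixed boundary data the Moebius map is uniquely determined; hence a uniform bound $\sup |S(\mathcal{R}^n T)| \leq C\rho^n$ on each branch translates, by standard continuous dependence of solutions of the Schwarzian ODE on the forcing term, into a $C^1$-bound $d_1(\mathcal{R}^n T, \mathbf{P}) \leq C'\rho^n$.

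First, I would exploit affine covariance. By definition of the dynamical partition $\mathcal{P}_n$, on each atom $I^t_\alpha(n)$ the map $\mathcal{R}^n T$ equals $T^{l^n_\alpha}|_{I^t_\alpha(n)}$ pre- and post-composed with affine rescalings to unit intervals. Since $S$ transforms covariantly under affine changes of variable,
$$|S(\mathcal{R}^n T)(y)| = |I^t_\alpha(n)|^2 \cdot |S(T^{l^n_\alpha})(x)|$$
at corresponding points $x \leftrightarrow y$, so it suffices to prove this product is uniformly $O(\rho^n)$ over the atoms of $\mathcal{P}_n$.

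Second, I would expand via the Schwarzian chain rule
$$S(T^k)(x) = \sum_{i=0}^{k-1} S(T)(T^i x) \cdot \big((T^i)'(x)\big)^2.$$
By definition of $\mathcal{P}_n$ and by infinite renormalisability, the intervals $T^i(I^t_\alpha(n))$ for $0 \leq i < l^n_\alpha$ are pairwise disjoint and avoid the break set of $T$. Lemma \ref{bound1} then gives uniformly bounded distortion for $T^i$ on $I^t_\alpha(n)$, so $(T^i)'(x) \leq C_0 |T^i(I^t_\alpha(n))|/|I^t_\alpha(n)|$. Combined with the pointwise bound $|S(T)| \leq M$ on each continuity branch (available since $T$ is $\mathcal{C}^r$ with $r>2$) and with the elementary inequality $\sum_i |T^i(I^t_\alpha(n))|^2 \leq \max_i |T^i(I^t_\alpha(n))|$ (from disjointness inside $[0,1]$), this yields
$$|I^t_\alpha(n)|^2 \cdot |S(T^{l^n_\alpha})(x)| \leq C_0^2 M \cdot \max_{\alpha, \, 0 \leq i < l^n_\alpha} |T^i(I^t_\alpha(n))|,$$
which is $C_0^2 M$ times the maximum diameter of an atom of $\mathcal{P}_n$.

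The only remaining input, and the genuine obstacle, is the real a priori bound for renormalisation of circle diffeomorphisms with breaks: the maximum diameter of atoms of $\mathcal{P}_n$ decays exponentially in $n$ at a rate depending only on $\mathbf{c}$. For $d=1$ this is classical (Khanin--Vul, Khanin--Khmelev) and for general $d \geq 1$ is exactly what the authors treat as standard material. The Schwarzian calculation above is a routine chain-rule/distortion exercise once Lemma \ref{bound1} is in hand; without the exponential a priori bound one would only obtain $\max \to 0$ via the Denjoy/minimality argument but at no specific rate. The geometric rate reflects the hyperbolicity of renormalisation forced by the nontrivial break sizes in $\mathbf{c}$, which is also why $\rho$ may be taken to depend only on $\mathbf{c}$.
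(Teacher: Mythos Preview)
Your argument is correct in spirit and closely parallels the paper's, but uses a different (and essentially equivalent) invariant. The paper does not work with the Schwarzian; instead it invokes the \emph{cross-ratio distortion} of Khanin--Teplinsky, shows via their Lemma~6 that the $\log$ of this distortion on each branch of $\mathcal{R}^n T$ is controlled by $\Delta_n = \sup_{J\in\mathcal{P}_n}|J|$, and then uses Proposition~\ref{estimatessize} (the exponential decay of $\Delta_n$, which you correctly identify as the only non-routine input). This yields $\mathcal{C}^0$-closeness to $\mathbf{P}$; the paper then upgrades to $\mathcal{C}^1$ via the uniform $\mathcal{C}^2$-bounds of Proposition~\ref{controlC2} rather than via a Schwarzian ODE continuity argument. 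Your chain-rule/distortion computation with $S$ is the infinitesimal analogue of their cross-ratio computation, and both reductions land on the same quantity $\Delta_n$.

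One genuine, if minor, issue: the Schwarzian derivative requires three derivatives, so your pointwise bound $|S(T)|\leq M$ is only available when $r\geq 3$, whereas the paper's standing assumption is merely $r>2$. The cross-ratio distortion is well-defined and controlled already for $\mathcal{C}^{2}$ maps, which is precisely why the paper chooses that route; your argument as written does not cover the range $2<r<3$. This is easily patched (either assume $r\geq 3$, or replace $S$ by the cross-ratio distortion and quote Khanin--Teplinsky directly), but it is worth flagging. Also, your final attribution of the rate to ``hyperbolicity forced by the nontrivial break sizes'' is a bit off: in the paper the exponential decay of $\Delta_n$ (Proposition~\ref{estimatessize}) comes from Denjoy--Koksma plus distortion and holds regardless of the break sizes.
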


\subsection{Size of the dynamical partition}

Let $T$ be a minimal (equivalently irrational rotation number) GIET with cyclic permutation. We denote by $\Delta_n(T) = \Delta_n = \sup_{J \in \mathcal{P}_n}{|J|}$. 

\begin{proposition}
\label{estimatessize}
There exists $\alpha<1$ such that for all $T$, there exists $D_T$ such that 

$$ \forall n \in \mathbb{N}, \  |\Delta_n | \leq D_T \cdot \alpha^n.$$

\end{proposition}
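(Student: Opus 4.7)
The plan is to combine a strengthened form of the distortion estimate from Lemma~\ref{bound1} that accommodates break points, with the exponential growth of the continued-fraction denominators $q_n$ of $\rho(T)$. The key observation is that in the construction of $\mathcal{P}_n$ the forward orbit of $I^t_\alpha(n)$ under $T$ consists of pairwise disjoint intervals, so each break point of $T$ is swept through \emph{at most once} along the orbit; its contribution to the total distortion is therefore bounded by a single $|\log c_i|$, and the aggregate break contribution is a uniform constant.

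First, I would upgrade Lemma~\ref{bound1} to allow break points inside the orbit, retaining only the hypothesis of pairwise disjointness of $J, T(J), \ldots, T^{m-1}(J)$. Writing $\eta_T = D \log DT$ away from the breaks, the argument of Lemma~\ref{bound1} still bounds the smooth contribution to $|\log DT^m(x) - \log DT^m(y)|$ by $\int_0^1 |\eta_T|\, d\mathrm{Leb}$, thanks to disjointness of the intervals $[T^j(y), T^j(x)]$. Each break point contributes at most $|\log c_i|$ to whichever $[T^j(y), T^j(x)]$ contains it, so one obtains a uniform distortion constant
\[
K_T := \exp\!\Bigl( \textstyle\int_0^1 |\eta_T|\,d\mathrm{Leb} + \sum_{i=1}^d |\log c_i| \Bigr),
\]
valid for any $J$ whose first $m$ iterates are pairwise disjoint, independently of $m$.

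Next, I would apply this upgraded bound to $J = I^t_\alpha(n)$ with $m = l^n_\alpha$: by the very definition of the dynamical partition, the iterates $T^j(I^t_\alpha(n))$ for $0 \le j < l^n_\alpha$ are pairwise disjoint subintervals of $[0,1]$. Bounded distortion transfers to lengths, so all these iterates have lengths mutually comparable up to the factor $K_T$. Summing,
\[
1 \;\ge\; \sum_{j=0}^{l^n_\alpha - 1} |T^j(I^t_\alpha(n))| \;\ge\; \frac{l^n_\alpha}{K_T}\, \max_{0 \le j < l^n_\alpha} |T^j(I^t_\alpha(n))|,
\]
so every atom of $\mathcal{P}^n_\alpha$ has length at most $K_T / l^n_\alpha$.

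Finally, I would identify the return times $l^n_\alpha$ with the continued-fraction denominators of $\rho(T)$: for the associated $2$-GIET the return times at level $n$ are exactly $q_n$ and $q_{n+1}$, and the $d$-GIET inherits these via the accelerated-induction correspondence of the preceding section. The recurrence $q_{n+1} \ge q_n + q_{n-1}$ together with $q_0, q_1 \ge 1$ gives $q_n \ge c\, \phi^n$ for some universal $c>0$ and $\phi$ the golden ratio, so
\[
\Delta_n \;\le\; (K_T / c) \cdot \phi^{-n},
\]
yielding the claim with exponential rate $\phi^{-1}$ and $D_T = K_T / c$. The main obstacle is the first step: one must carefully modify Lemma~\ref{bound1} to absorb the finitely many breaks into a uniform distortion constant, but disjointness of the orbit ensures that their aggregate contribution is bounded by $\sum_i |\log c_i|$, making the extension essentially free.
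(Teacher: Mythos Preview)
Your argument has a genuine gap in the second step. Bounded distortion of $T^j$ on $J=I^t_\alpha(n)$ says only that $DT^j(x)/DT^j(y)\le K_T$ for $x,y\in J$ and each fixed $j$; it controls how $T^j$ distorts $J$, not the overall scale. Concretely, $|T^j(J)|\asymp DT^j(x_0)\,|J|$ for any fixed $x_0\in J$, and the ratio of two such lengths is $DT^j(x_0)/DT^k(x_0)=\exp\bigl(S_j(x_0)-S_k(x_0)\bigr)$ with $S_m=\sum_{i<m}\log DT\circ T^i$. Denjoy--Koksma bounds $S_{q_n}$ uniformly, but for intermediate $m<q_n$ there is no such bound in general: for diffeomorphisms with breaks the conjugacy to the rotation is typically only $\mathcal{C}^0$ (the invariant measure is singular), so $\log DT$ is not a bounded coboundary and the Birkhoff sums $S_m$ are unbounded. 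Hence the intervals $T^j(J)$ need \emph{not} have mutually comparable lengths, and the inequality
\[
\sum_{j=0}^{l^n_\alpha-1}|T^j(J)|\ \ge\ \frac{l^n_\alpha}{K_T}\,\max_j|T^j(J)|
\]
is unjustified. Without it, the conclusion $\Delta_n\le K_T/q_n$ does not follow from disjointness alone.

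The paper circumvents this entirely. Rather than comparing all atoms of $\mathcal{P}_n$ to one another, it works \emph{incrementally}: Denjoy--Koksma bounds the derivative of the renormalised maps $\mathcal{R}^nT$ uniformly, which forces the base intervals of $\mathcal{P}_{n+2}$ to be a definite fraction shorter than those of $\mathcal{P}_n$; the distortion bound of Lemma~\ref{bound1} then propagates this balanced subdivision from the base to every atom of $\mathcal{P}_n$, yielding $\Delta_{n+2}/\Delta_n\le\kappa<1$ and hence exponential decay. Your steps~1 and~3 are fine, but to repair the argument you would need an independent uniform bound on $\sup_{m,x}|S_m(x)|$, which is precisely what fails in the presence of breaks.
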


\begin{proof}

Note that the derivative of $T$ is a piecewise $\mathcal{C}^{r-1}$ function and therefore has bounded variation. We can think of $T$ as a $\mathcal{C}^0$ GIET on $2$-intervals, with derivative well-defined everywhere but at finitely many points, and with bounded variation. We can consider $\mathcal{P}'_n$ the dynamical partition associated with the renormalisation of this $2$-GIET. The dynamical partion $\mathcal{P}_n$ is just a refinement of $\mathcal{P}'_n$, and therefore if we denote by $\Delta'_n := \sup_{J \in \mathcal{P}'_n}{|J|}$ we have 

$$ \Delta_n \leq   \Delta'_n.$$ 

\noindent We give only a quick sketch for the rest of the proof. By Denjoy-Koksma inequality (Theorem \ref{DK}), the derivative of consecutive renormalisations remain uniformly bounded below and above. Using that fact, one can prove that the intervals at the base of the dynamical partition see their lengths divided by a uniform constant every two iterations of $\mathcal{R}$. The dynamical partition $\mathcal{P}'_{n+2}$ is the refining of $\mathcal{P}'_n$ obtained by propagating the partition of interval the of $\mathcal{R}^nT$. Because of the distortion bound of Proposition \ref{bound1} applied to the dynamical partition, the induced subdivision of each element of $\mathcal{P}_n$ is "balanced" and altogether this gives the existence of a constant $\kappa<1$ such that 

$$ \frac{\Delta_{n'+2}}{\Delta_n} \leq \kappa.$$

\noindent  The Proposition is implied by the inequality $\Delta_n \leq \Delta'_n$. A quantitative version of the above reasoning  can be found in \cite{KhaninSinai}, see Lemma 2 therein and its proof.
\end{proof}

\subsection{$\mathcal{C}^2$-bounds}

In this paragraph we prove an estimate which gives some uniform bounds on the second derivative of iterated renormalisations. The proof builds upon Lemma \ref{bound1}.

\begin{lemma}
\label{secondderivative}
Let $\varphi_1, \cdots, \varphi_n \in \mathcal{C}^2(\mathbb{R}, \mathbb{R})$. For all $k \leq n$ define $f_k = \varphi_k \circ \varphi_{k-1} \circ \cdots \circ \varphi_1$ and set $f_0 = \mathrm{Id}$. Then we have for all $n \geq 2$ the formula 

$$ f_n'' =  (f'_{n-1})^2 \cdot (\varphi''_{n} \circ f_{n-1}) +  \sum_{k=2}^n{ (f'_{n-k})^2 \cdot (\varphi''_{n-k+1} \circ f_{n-k})  \cdot (\varphi_n \circ \cdots \circ \varphi_{n-k+2})' \circ f_{n-k+1}   } $$

\end{lemma}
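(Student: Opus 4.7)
The plan is to set up a one-step recurrence for the second derivative $f_k''$ by writing $f_k = \varphi_k \circ f_{k-1}$, and then unroll it, using the chain rule to recognise the resulting products as derivatives of partial compositions.

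First, I would apply the chain rule once to get $f_k' = (\varphi_k' \circ f_{k-1}) \cdot f_{k-1}'$, and differentiate this identity once more to obtain the recurrence
\[
f_k'' \;=\; (\varphi_k'' \circ f_{k-1}) \cdot (f_{k-1}')^2 \;+\; (\varphi_k' \circ f_{k-1}) \cdot f_{k-1}'',
\]
valid for all $k \geq 1$, with the initial data $f_0 = \mathrm{Id}$, so that $f_0' = 1$ and $f_0'' = 0$. Setting $b_k := (\varphi_k'' \circ f_{k-1}) \cdot (f_{k-1}')^2$ and $c_k := \varphi_k' \circ f_{k-1}$, this reads $f_k'' = b_k + c_k \cdot f_{k-1}''$, a linear inhomogeneous recurrence that unrolls to
\[
f_n'' \;=\; \sum_{k=1}^{n} \Big( \prod_{j=k+1}^{n} c_j \Big)\, b_k \;=\; \sum_{k=1}^{n} \Big(\prod_{j=k+1}^{n} (\varphi_j' \circ f_{j-1})\Big) \cdot (\varphi_k'' \circ f_{k-1}) \cdot (f_{k-1}')^2,
\]
where for $k=n$ the product over $j$ is empty and equals $1$; this is the claim I would prove by an immediate induction on $n$ from the recurrence.

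Next, I would identify the product of derivatives with the derivative of a composition using the chain rule in reverse. Namely, for $g := \varphi_n \circ \varphi_{n-1} \circ \cdots \circ \varphi_{k+1}$, the chain rule yields
\[
g'(y) \;=\; \prod_{j=k+1}^{n} \varphi_j'\big(\varphi_{j-1}\circ \cdots \circ \varphi_{k+1}(y)\big),
\]
so evaluating at $y = f_k(x)$ and using $\varphi_{j-1} \circ \cdots \circ \varphi_{k+1} \circ f_k = f_{j-1}$ gives
\[
(\varphi_n \circ \cdots \circ \varphi_{k+1})' \circ f_k \;=\; \prod_{j=k+1}^{n} (\varphi_j' \circ f_{j-1}).
\]
Substituting this identity into the unrolled expression and re-indexing the summation via $k \mapsto n-k+1$ (so that the $k=n$ term becomes the stand-alone first term $(f_{n-1}')^2 \cdot (\varphi_n'' \circ f_{n-1})$ and the remaining terms are indexed by $k = 2,\ldots, n$) produces exactly the formula in the statement.

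The main \emph{obstacle} is purely bookkeeping: one has to be careful with the index shift between the natural unrolling variable and the variable $k$ used in the statement, and with the empty product convention for the top term. There is no genuine analytic difficulty, only the need to verify that the reverse chain-rule identity precisely accounts for the telescoping product of first derivatives appearing in the recurrence.
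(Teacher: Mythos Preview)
Your proposal is correct and follows essentially the same approach as the paper: both derive the one-step recurrence $f_k'' = (\varphi_k''\circ f_{k-1})(f_{k-1}')^2 + (\varphi_k'\circ f_{k-1})\,f_{k-1}''$ and use the chain-rule identity $\prod_{j=k+1}^n(\varphi_j'\circ f_{j-1}) = (\varphi_n\circ\cdots\circ\varphi_{k+1})'\circ f_k$. The only difference is organizational---the paper verifies the stated formula by direct induction on $n$, whereas you first unroll the recurrence to a closed sum and then reindex---and this difference is immaterial.
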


\begin{proof}

We proceed by induction on $n$. We check that the statement holds true for $n = 2$: 

$$ f_2'' = (\varphi_2 \circ \varphi_1)' = (\varphi'_1 \cdot \varphi_2'\circ \varphi_1)' = (\varphi'_1)^2 \cdot \varphi''_2 \circ \varphi_1 + \varphi''_1 \cdot \varphi_2' \circ \varphi_1.$$ 

\noindent Assume the statement holds true for $n \geq 2$. We have 

$$ f''_{n+1} = (\varphi_{n+1} \circ f_n)'' = \varphi''_{n+1} \circ f_n \cdot (f'_n)^2 + f''_n \cdot \varphi'_{n+1}\circ f_n.$$ Replacing $f''_n$ in the formula we get 

\begin{equation*}
\begin{split}
 & f''_{n+1} = (\varphi_{n+1} \circ f_n)'' =   \varphi''_{n+1} \circ f_n \cdot (f'_n)^2 +    (\varphi'_{n+1}\circ f_n) \cdot (f'_{n-1})^2 \cdot (\varphi''_{n} \circ f_{n-1}) \\
 &   + \sum_{k=2}^n{ (f'_{n-k})^2 \cdot (\varphi''_{n-k+1} \circ f_{n-k}) \cdot  (\varphi'_{n+1}\circ f_n) \cdot (\varphi_n \circ \cdots \circ \varphi_{n-k+2})' \circ f_{n-k+1}}. 
 \end{split}
\end{equation*} By the chain rule we have 

$$ (\varphi'_{n+1}\circ f_n) \cdot (\varphi_n \circ \cdots \circ \varphi_{n-k+2})' \circ f_{n-k+1}   =   ( \varphi_{n+1} \circ \varphi_n \circ \cdots \circ \varphi_{n-k+2})' \circ f_{n-k+1}$$ 

\noindent Injecting in the formula above for $f''_{n+1}$ gives the expected result.
\end{proof}

\noindent Consider a $\mathcal{C}^2$, increasing diffeomorphism $f : I \longrightarrow J$ where $I$ and $J$ are two connected intervals. We denote by $\mathrm{N}(f)$ the \textit{normalisation} or \textit{rescaling} of $f$, it is by definition the map $f$ pre-composed by the unique affine map sending $[0,1]$ onto $I$ and post-composed  by the unique affine map sending $J$ onto $[0,1]$. We have the following easy lemma:

\begin{lemma}
\label{normalised}
Let $f$ as above. Then we have 

$$ || \mathrm{N}(f)'' || \leq ||f'^{-1}|| \cdot ||f''|| \cdot |I| $$ 

\end{lemma}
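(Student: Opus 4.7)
The plan is to compute $\mathrm{N}(f)''$ directly and then bound the ratio $|I|/|J|$ using the mean value theorem; both steps are elementary and the lemma is essentially a bookkeeping exercise.

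First I would write the rescaling maps explicitly. Let $\alpha : [0,1] \to I$ and $\beta : J \to [0,1]$ be the unique increasing affine bijections, so that $\alpha(t) = \inf I + |I| \cdot t$ and $\beta(y) = (y - \inf J)/|J|$. Then by definition $\mathrm{N}(f) = \beta \circ f \circ \alpha$. Since $\alpha$ and $\beta$ are affine with slopes $|I|$ and $1/|J|$ respectively, the chain rule gives
$$ \mathrm{N}(f)'(t) = \frac{|I|}{|J|} \cdot f'(\alpha(t)), \qquad \mathrm{N}(f)''(t) = \frac{|I|^2}{|J|} \cdot f''(\alpha(t)). $$
Taking sup norms, this yields the identity
$$ \| \mathrm{N}(f)'' \| = \frac{|I|^2}{|J|} \cdot \|f''\|. $$

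Next I would bound the factor $|I|/|J|$. By the mean value theorem there exists $\xi \in I$ with $|J| = |f(I)| = f'(\xi) \cdot |I|$, so $|I|/|J| = 1/f'(\xi) \leq \|1/f'\| = \|f'^{-1}\|$. Substituting this into the identity above gives
$$ \|\mathrm{N}(f)''\| \leq \|f'^{-1}\| \cdot \|f''\| \cdot |I|, $$
as required.

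There is no real obstacle here: the only subtlety is keeping track of which affine factor contributes where, and remembering that $\|f'^{-1}\|$ in the statement denotes the sup norm of the reciprocal of the derivative (as used throughout the paper), not the derivative of the inverse map. Once that notational point is fixed, the proof is a two-line chain-rule computation plus one application of the mean value theorem.
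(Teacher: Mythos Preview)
Your proof is correct and follows essentially the same route as the paper: compute $\mathrm{N}(f)''$ via the chain rule to obtain the factor $|I|^2/|J|$, then apply the mean value theorem to bound $|I|/|J|$ by $\|1/f'\|$. Your write-up is in fact slightly cleaner, since you keep track of the affine offsets and make explicit the reading of $\|f'^{-1}\|$ as $\sup 1/f'$.
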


\begin{proof}

\noindent Let $a = |I|$ and $b = |J|$. By definition we have 

$$ \mathrm{N}(f) := x \longmapsto \frac{1}{b} f(ax).$$ Thus 

$$ \mathrm{N}(f)''(x) = \frac{a^2}{b} f(ax) = a \frac{a}{b} f''(ax).$$ There exists $x_0 \in I$ such that  $\frac{1}{f'(x_0)} = \frac{|I|}{|J|} = \frac{a}{b}$. Hence the result.

\end{proof}

\noindent Using Lemma \ref{secondderivative} and Lemma \ref{normalised}, we prove the following 

\begin{proposition}
\label{controlC2}
Let $T$ be a $\mathcal{C}^2$-generalised IET and assume $T$ is renormalisable $n$ times. There exists a constant $M(T) >0$ such that the following holds. We use the following notation $\pi_{\mathcal{P}}\big( \mathcal{R}^n(T) \big) = (\varphi_1^n, \cdots, \varphi_d^n) \in \big( \mathrm{Diff}_+^2([0,1]) \big)^d$. Then we have for all $i\leq d$ and for all $n \in \mathbb{N}$

$$ || \mathcal{R}^n(T)'' || \leq M' ||(T^{-1})'|| \cdot  || T'' || $$  

\end{proposition}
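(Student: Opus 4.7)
The plan is to analyse $(\mathcal{R}^n T)''$ branch by branch, applying the Fa\`a~di Bruno type formula of Lemma \ref{secondderivative} to unfold the second derivative of the return-map composition, then dominating each resulting term with the distortion estimate of Lemma \ref{bound1}. Fix a branch $I = I^t_\alpha(n)\in\mathcal{P}_n$ of $\mathcal{R}^n T$ with return time $l := l^n_\alpha$, and set $J := T^l(I)$. By construction the iterates $I, T(I),\ldots,T^{l-1}(I)$ are pairwise disjoint elements of the dynamical partition, and none of them contains a break point in its interior. Take $\varphi_j := T\vert_{T^{j-1}(I)}$ in Lemma \ref{secondderivative}, so that $(T^l\vert_I)''(x)$ is expressed as a sum of $l$ terms indexed by $k = 1,\ldots,l$, each consisting of a factor $(T^{l-k})'(x)^2$, a single $T''$ evaluated on $T^{l-k}(I)$, and a factor $(T^{k-1})'$ evaluated at a point of $T^{l-k+1}(I)$.

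The next step is to apply Lemma \ref{bound1}, whose hypothesis is satisfied by the disjoint, break-free tower $I,T(I),\ldots,T^{l-1}(I)$. Distortion permits replacing each derivative factor by a ratio of interval lengths up to a universal constant: $(T^{l-k})'$ on $I$ is comparable to $|T^{l-k}(I)|/|I|$, and $(T^{k-1})'$ on $T^{l-k+1}(I)$ is comparable to $|J|/|T^{l-k+1}(I)|$. Bounding $T''$ trivially by $\|T''\|_\infty$, the $k$-th term of the sum is dominated (up to a constant depending only on the distortion of $T$) by
$$ \frac{|T^{l-k}(I)|^2}{|I|^2}\cdot \|T''\|_\infty \cdot \frac{|J|}{|T^{l-k+1}(I)|}. $$
Lemma \ref{normalised} then accounts for the affine rescaling to the standard interval by multiplying by $|I|^2/|J|$, yielding the clean bound $\|T''\|_\infty \cdot |T^{l-k}(I)|^2/|T^{l-k+1}(I)|$ for the $k$-th term of $\mathrm{N}(T^l\vert_I)''$.

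Finally, each ratio $|T^{l-k}(I)|/|T^{l-k+1}(I)|$ is at most $\|(T^{-1})'\|_\infty$ by the mean value theorem, while pairwise disjointness of the iterates gives $\sum_{k=1}^{l}|T^{l-k}(I)| \leq 1$. Summing the $l$ contributions produces a uniform bound of the form $M'\|(T^{-1})'\|\cdot \|T''\|$ on each branch, and taking the supremum over $\alpha$ yields the proposition. The main obstacle is not analytic but combinatorial: one must ensure that Lemma \ref{bound1} is applied only along a genuinely disjoint, break-free tower (guaranteed by the definition of $\mathcal{P}_n$), and then group the three derivative factors of each term so that the normalisation factor $|I|^2/|J|$ cancels the unwanted denominators $|I|^2$ and $|J|$, leaving an expression which telescopes against the trivial length estimate $\sum_j|T^j(I)| \leq 1$.
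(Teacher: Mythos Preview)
Your proof is correct and follows the same strategy as the paper: apply Lemma~\ref{secondderivative} to the tower composition, control the first-derivative factors via the distortion bound of Lemma~\ref{bound1}, and use disjointness of the $T^j(I)$ to sum interval lengths to at most~$1$. The only organisational difference is that the paper first normalises each piece $S_k = T|_{T^k(I)}$ to a self-map of $[0,1]$ and then applies Lemma~\ref{secondderivative} to the composition $\mathrm{N}(S_{l-1})\circ\cdots\circ\mathrm{N}(S_0)$ (so that partial products are diffeomorphisms of $[0,1]$ and Lemma~\ref{bound1} bounds their derivatives outright), whereas you apply Lemma~\ref{secondderivative} to the unnormalised $T^l|_I$, replace derivatives by length ratios, and normalise only at the end; the two bookkeepings are equivalent and produce the same bound.
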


\begin{proof}

Let $(\varphi_1^n, \cdots, \varphi_d^n) $ be the renormalised branches of $T$. The proof is an application of Lemma \ref{secondderivative} to the composition of restrictions of $T$ to the dynamical partition. By definition  $\varphi_i^n$ is the renormalised of $T^{l^i_n}$ restricted to an interval $I_n^i$ such that $I^j_n, T(I^j_n),  T^2(I^j_n), \cdots, T^{l^j_n-1}(I^j_n)$ are disjoint. We denote by $S_k$ the restriction of $T$ to $T^k(I^j_n)$. We have the following properties 

\begin{itemize}

\item $\varphi_i^n =\mathrm{N}(S_{l^j_n-1})  \circ \cdots \circ \mathrm{N}(S_1)  \circ \mathrm{N}(S_0) $ 

\item any partial product $\psi_k = \mathrm{N}(S_{l^j_n-1})  \circ \cdots \circ \mathrm{N}(S_k)$ is such that $||\log (\psi_k)'|| \leq K ||T''||$ ($\psi_k$ is a diffeomorphism of $[0,1]$ and therefore there exists $x_0 \in [0,1]$ such that $\log \psi'_k (x_0) = 0$ and the claim follows from Lemma \ref{bound1});

\item same holds for partial products $\phi_k = \mathrm{N}(S_{k})  \circ \cdots \circ \mathrm{N}(S_0)$;

\item for any $k$, $||\mathrm{N}(S_k)||' \leq ||(T^{-1})'|| \cdot ||T''|| \cdot |T^k(I^j_n)| $.

\end{itemize}

\noindent The result is a consequence of Lemma \ref{secondderivative} applied to $\mathrm{N}(S_{l^j_n-1})  \circ \cdots \circ \mathrm{N}(S_1)  \circ \mathrm{N}(S_0)$. Indeed

$$  || \varphi_i^n)'' || \leq  \sum_{k=1}^n{  ||\phi_{n-k}'|| \cdot || \mathrm{N}''(S_{n-k+1})|| \cdot || \psi'_{n-k+2} ||  } $$ and replacing in the inequality 

$$ || \varphi_i^n)'' || \leq e^{2K||T''||} \cdot ||(T^{-1})'|| \cdot ||T''|| \sum_{k=0}^{n-1}{|T^k(I^j_n)|}.$$  The $T^k(I^j_n)$s are all disjoint and the $\exp$ being bounded on bounded sets, we get the uniform bound for the $(\varphi_i^n)''$. But because of the Denjoy-Koksma inequality (Theorem \ref{DK}) applied to $\log T'$, we get that $|\log (\mathcal{R}^i(T))'| \leq \mathrm{Var}(\log T')$, the ratio between $(\varphi_i^n)'$  and the derivative of the corresponding branch of $\mathcal{R}^i(T)$ is bounded by a uniform constant depending only on $T$. This implies the Proposition.

\end{proof}

\subsection{Convergence to $\mathbf{P}$}

\noindent We give a sketch of the proof as it is already covered in many places in the literature, see \cite{KhaninTeplinsky} for instance. The fast decay of the size of the dynamical partition implies that branches of $\mathcal{R}^nT$ look more and more like Moebius maps. This can be quantified using material borrowed from \cite{KhaninTeplinsky}. In this article, the authors introduce what they call the distortion of a diffeomorphism $f$ of the interval which encodes how cross-ratios are modified under the action of $f$. This distortion behaves nicely under compositions and it is easy to show using Lemma 6 in \cite{KhaninTeplinsky} that the $\log$ of the distortion of (each branch of) $\mathcal{R}^n$ is proportional to $\Delta_n$. The distortion of a map is close to $1$ if and only if it $\mathcal{C}^0$-close to a Moëbius map. But because of the $\mathcal{C}^2$-bounds,  $\mathcal{C}^0$-closeness implies $\mathcal{C}^1$-closeness. From this discussion we get Theorem \ref{convergenceMoebius}.

\section{Character varieties and symplectic structure}

In this section we show how connected components of $\mathbf{P}$ naturally identify with open sets of the space of representations of the fundamental group of a punctured torus into $\mathrm{PSL}(2,\mathbb{R})$. We then show that consecutive renormalisation are attracted by a codimension $1$ submanifold of $\mathbf{P}$ and that this submanifold identifies with a \textit{character variety}. This allows to endow the attractor of renormalisation with a natural \textit{symplectic structure}, thanks to the structural understanding of character varieties.

\noindent \textbf{In the sequel we make the standing assumption that all the $c_i$'s are different from $1$}.

\subsection{Associating a representation to a projective IET}

Fix a circular permutation $\sigma$ once and for all. Let $\mathbf{P} =\mathbf{P}_{\sigma}$ be the set of piecewise projective IETs with permutation $\sigma$. From now on, we allow PIETs to be maps from  \textit{any} interval $[a,b]$ to itself.

\vspace{3mm}

\paragraph*{\bf A polygonal model for the $n$-punctured torus}  For any such permutation $\sigma$, we associate a presentation of the $n$-punctured torus: we consider a $2(n+1)$-gon and we label its sides in this order $a_1,a_2, \cdots, a_{n+1}, a_{\sigma(n+1)}, a_{\sigma(n)}, \cdots, a_{\sigma(1)}$ and (topologically) glue together sides with the same label, as in the picture below:

\begin{figure}[!h]
  \centering
  \includegraphics[scale=0.4]{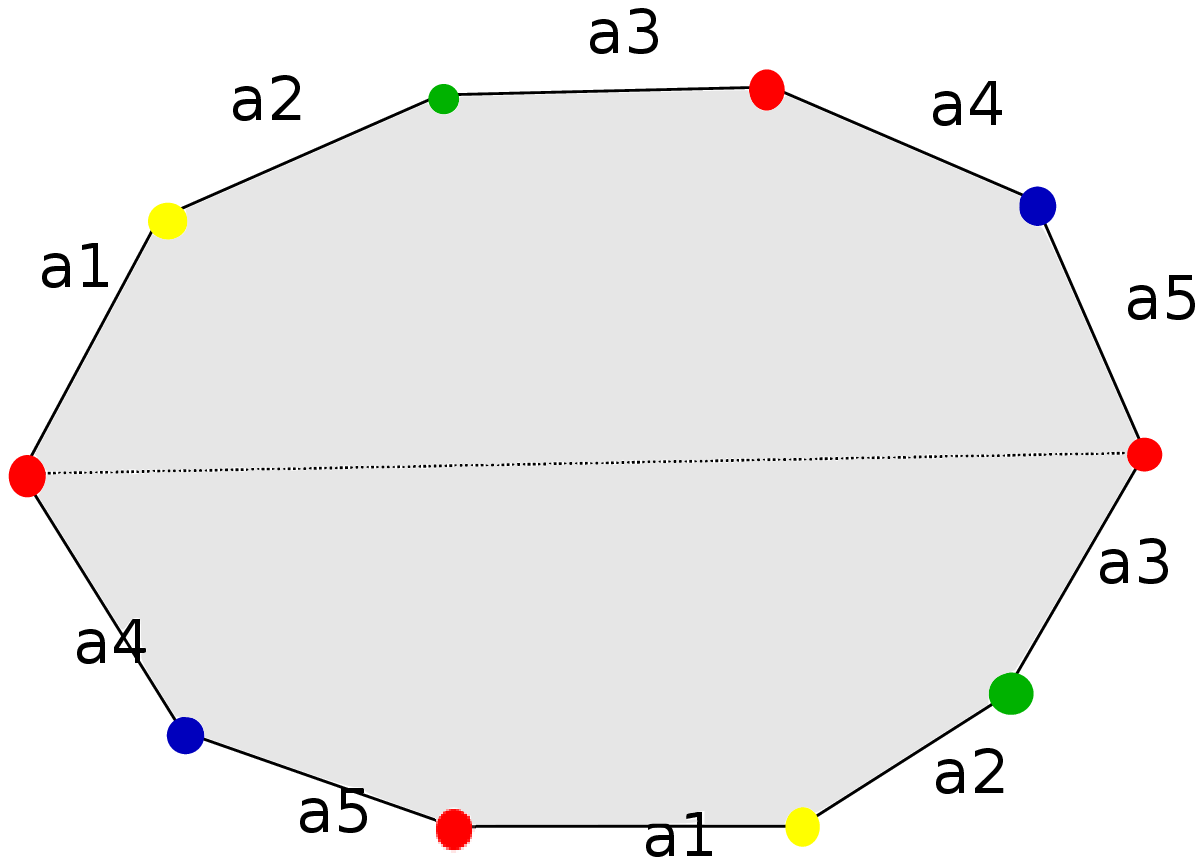}
  \caption{Presentation of a $5$-punctured torus.}
  \label{disco2}
\end{figure}

\noindent The boundary can be split into two broken lines, on at the top with  consecutive sides labelled  $a_1,a_2, \cdots, a_n$ and one at the bottom with consecutive sides labelled $ a_{\sigma(n)}, a_{\sigma(n-1)}, \cdots, a_{\sigma(1)}$.

\noindent These two lines are two be thought of as a geometric representation of the permutation $\sigma$, as it is classical to do in the world of interval exchange transformations.

\vspace{3mm}

\paragraph*{\bf The fundamental group of $\Sigma_{1,n}$.} Fix a point $p$ inside the polygon. For all $i$, consider a loop $\gamma_i$ based at p joining the middle points of the two sides labelled $a_i$ (one on the top line and one on the bottom line). We get this way $n+1$ closed loops $(\gamma_1, \cdots, \gamma_n)$  based at $p$.  See picture below 

\begin{figure}[!h]
  \centering
  \includegraphics[scale=0.4]{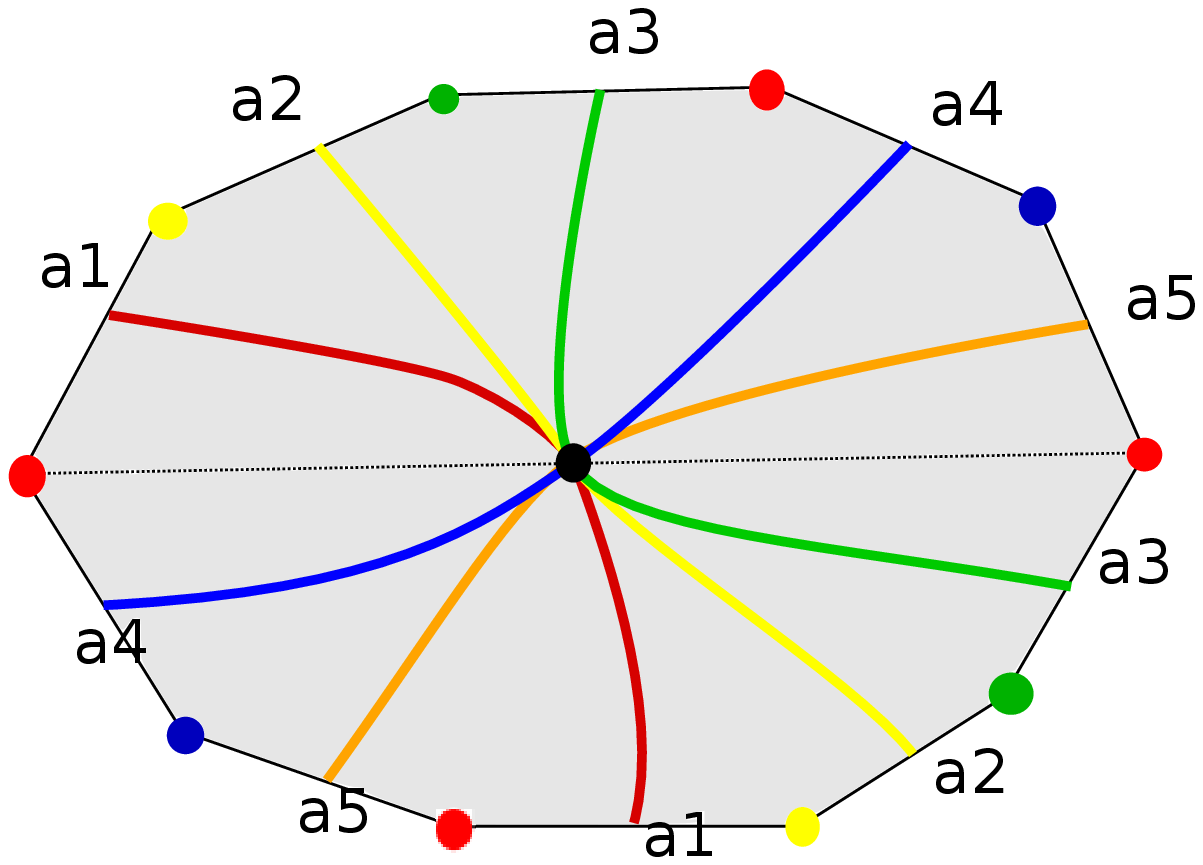}
  \caption{Presentation of a $5$-punctured torus.}
  \label{disco2}
\end{figure}

\begin{proposition}

The classes $[\gamma_1], \cdots, [\gamma_{n+1}]$ freely generate the fundamental group of $\Sigma_{n+1}$.

\end{proposition}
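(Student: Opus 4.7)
The plan is to show that the surface $\Sigma_{1,n}$, realised as the quotient of the $2(n+1)$-gon $P$ with the prescribed edge identifications and its vertex set removed, deformation retracts onto a wedge of $n+1$ circles, each circle being exactly one of the loops $\gamma_i$. Once this is established, the proposition follows from the standard computation $\pi_1\bigl(\bigvee_{i=1}^{n+1} S^1\bigr) \cong F_{n+1}$.

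First I would set up the polygonal model precisely. The closed disc $P$ has its boundary divided into $2(n+1)$ edges, and the identification pairs each top edge labelled $a_i$ with the bottom edge labelled $a_i$. The quotient $\overline{\Sigma}$ is a closed surface, and the punctures/boundary components of $\Sigma_{1,n}$ correspond exactly to the equivalence classes of vertices of $P$ under the identification (this is the standard way in which IET-type gluings produce punctured translation surfaces; a quick Euler characteristic check with one face, $n+1$ identified edges, and the appropriate vertex classes gives $\chi = -n$, consistent with the $n$-holed torus). Accordingly $\Sigma_{1,n}$ is obtained from $P \setminus V(P)$ by identifying the boundary edges in pairs, where $V(P)$ is the set of $2(n+1)$ vertices of $P$.

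The core geometric step is to build an explicit deformation retract of $P \setminus V(P)$ onto the \emph{spider} $X \subset P$ consisting of the basepoint $p$ together with the $2(n+1)$ straight segments joining $p$ to the midpoint $m_i^{\mathrm{top}}$ (resp.\ $m_i^{\mathrm{bot}}$) of each boundary edge. The retraction proceeds in two standard stages: inside each "pie slice" bounded by two consecutive radial segments, one first radially deformation retracts the open boundary edge (minus its two endpoints) onto its midpoint, and then the slice onto its two bounding radial segments. Pushing this retraction down to $\Sigma_{1,n}$, the image $X/\!\sim$ is precisely the wedge of $n+1$ loops based at $p$, since for each label $a_i$ the two radial segments $[p,m_i^{\mathrm{top}}]$ and $[m_i^{\mathrm{bot}}, p]$ are joined at their outer endpoints by the identification and form the loop $\gamma_i$, while distinct labels yield distinct loops meeting only at $p$.

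Combining these two steps, $\Sigma_{1,n}$ deformation retracts onto $\bigvee_{i=1}^{n+1}\gamma_i$, and therefore $\pi_1(\Sigma_{1,n},p)$ is freely generated by $[\gamma_1],\dots,[\gamma_{n+1}]$. The main delicate point I anticipate is the bookkeeping in the deformation retract near the vertices of $P$: one must ensure that the retraction remains well-defined on $P \setminus V(P)$ without collapsing distinct boundary pieces prematurely, and that it descends continuously to the quotient (i.e.\ is compatible with the pairing of edges). Everything else — orientability, correct Euler characteristic, and identification of the resulting wedge with $\bigvee\gamma_i$ — is routine once the circular structure of $\sigma$ is used to match the top and bottom broken lines.
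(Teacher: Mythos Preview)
Your proposal is correct and follows essentially the same approach as the paper: the paper's proof simply notes that one may either apply Van Kampen's theorem directly or, equivalently, observe that $\Sigma$ deformation retracts onto $\bigcup_i \gamma_i$, whose fundamental group is free on the $[\gamma_i]$'s. Your argument is exactly a careful unpacking of this second route --- building the spider in the punctured polygon, checking that the retraction descends to the quotient, and identifying the image with the wedge $\bigvee_i \gamma_i$ --- so the strategies coincide, with your version supplying the details the paper leaves implicit.
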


\begin{proof}
This is direct application of Van Kampen theorem. Another way to see it is to remark that $\Sigma$ deformation retracts onto the union $\bigcup{\gamma_i}$ whose fundamental group is the free group generated by the classes $[\gamma_i]$s.

\end{proof}

\vspace{3mm}

\paragraph*{\bf The representation associated to a PIET} Now consider $f$ an element of $\mathbf{P}_{\sigma}$. Let $I_1, \cdots, I_{n+1}$ its consecutive intervals of continuity. By definition, there exists an element $f_i \in \mathrm{PSL(2,\mathbb{R}})$ such that $f_{|I_i} = f_i$. 

\vspace{2mm}

\noindent Considering the above presentation of $\pi_1 \Sigma_{1,n}$, we define for all $f$ a representation $\rho : \pi_1 \Sigma_{1,n} \longrightarrow \mathrm{PSL}(2, \mathbb{R})$ by the following pairing 

$$ \rho: \gamma_i \longmapsto f_i .$$

\noindent Since $\pi_1 \Sigma_{1,n}$ is free and freely generated by the $\gamma_i$s, this defines a representation. We have defined a map 

$$ \psi : \mathbf{P}_{\sigma} \longrightarrow \mathrm{Hom} \big( \pi_1 \Sigma_{1,n},  \mathrm{PSL}(2, \mathbb{R}) \big).$$ Because $\pi_1 \Sigma_{1,n}$ is a free group of rank $n+1$, $\mathrm{Hom} \big( \pi_1 \Sigma_{1,n},  \mathrm{PSL}(2, \mathbb{R}) \big)$ identifies with $\mathrm{PSL}(2, \mathbb{R})^{n+1}$ and can therefore be endowed with a natural structure of smooth manifold. We now prove:

\begin{proposition}
\label{identification}
$ \psi : \mathcal{M}_{\sigma} \longrightarrow \mathrm{Hom} \big( \pi_1 \Sigma_{1,n},  \mathrm{PSL}(2, \mathbb{R}) \big) $ is a (local) diffeomorphism onto an open set of $\mathrm{Hom} \big( \pi_1 \Sigma_{1,n},  \mathrm{PSL}(2, \mathbb{R}) \big) $.
\end{proposition}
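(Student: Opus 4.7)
The plan is to invoke the inverse function theorem after verifying that $\mathcal{M}_\sigma$ and $\mathrm{Hom}(\pi_1 \Sigma_{1,n}, \PSL) \cong \PSL^{n+1}$ have the same dimension. An element of $\mathcal{M}_\sigma$ is naturally parametrised by the endpoints $a,b$ of the ambient interval ($2$ parameters), the $n$ interior top breakpoints $u_1^t,\ldots,u_n^t$ ($n$ parameters) and the $n+1$ Moebius branches $f_1,\ldots,f_{n+1}\in \PSL$ ($3(n+1)$ parameters), subject to the $n+2$ scalar tiling equations stating that the images $f_i([u_{i-1}^t, u_i^t])$ tile $[a,b]$ in the order prescribed by $\sigma$ (with the convention $u_0^t = a$, $u_{n+1}^t = b$). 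A parameter count then gives $\dim \mathcal{M}_\sigma = 2+n+3(n+1)-(n+2) = 3(n+1) = \dim \PSL^{n+1}$.

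Since $\psi$ is manifestly smooth and the source and target have equal dimension, it suffices to show that $d\psi_f$ is an isomorphism at each $f\in\mathcal{M}_\sigma$; the inverse function theorem then delivers both the local diffeomorphism statement and the openness of the image. Equivalently, I would construct a smooth local inverse explicitly: given a tuple $(g_1,\ldots,g_{n+1})$ close to $(f_1,\ldots,f_{n+1})$, solve for the $n+2$ unknowns $a',b',u_1^{t'},\ldots,u_n^{t'}$ in the $n+2$ tiling equations obtained by replacing the $f_i$'s with the $g_i$'s. The implicit function theorem applies as soon as the Jacobian of this system with respect to these unknowns is non-degenerate at the starting point $(a,b,u_1^t,\ldots,u_n^t)$.

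The main obstacle is the verification of this non-degeneracy. The Jacobian is an $(n+2)\times(n+2)$ sparse matrix with exactly two non-zero entries per row: entries $\pm 1$ coming from the two boundary equations, and entries $\pm f_i'(u_j^t)$ coming from the $n$ gluing equations, all these derivatives being strictly positive since the $f_i$'s are orientation-preserving Moebius maps. The coupling pattern between equations and unknowns is dictated by $\sigma$, and because $\sigma$ is a single cycle the equation--unknown graph is a single chain threading through all $n+2$ variables. A direct expansion of the determinant then yields a difference of two products of the $f_i'(u_j^t)$'s which, by the chain rule, can be rewritten as $(f_{i_r}\circ \cdots \circ f_{i_1})'(u_\ast^t) - (f_{j_s}\circ\cdots\circ f_{j_1})'(u_\ast^t)$ for two distinct compositions of branches that agree in value at a suitable breakpoint $u_\ast^t$ (in the $n=1$ case this specialises to the familiar expression $(f_2\circ f_1)'(u_1^t) - (f_1\circ f_2)'(u_1^t)$). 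Under the standing assumption that the break sizes $c_i$ are all different from $1$, adjacent branches have non-trivially differing derivatives at each breakpoint, so the two compositions have distinct $1$-jets at $u_\ast^t$ and the determinant does not vanish; the inverse function theorem then concludes the proof.
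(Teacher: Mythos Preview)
Your strategy---match dimensions and invoke the implicit function theorem---is sound and is a legitimate alternative to the paper's argument, which instead \emph{constructs} the local inverse directly: the paper observes that each breakpoint of the PIET is a fixed point of $\rho(\gamma)$ for a specific loop $\gamma$ around the corresponding puncture (e.g.\ the right endpoint of $I_1$ is a fixed point of $\rho(\gamma_1\gamma_2^{-1})$), so one recovers the breakpoints from the representation as fixed points, and then the PIET itself by restricting the $\rho(\gamma_i)$ to the intervals so obtained. Your approach and the paper's are in fact two faces of the same coin: non-degeneracy of your tiling Jacobian is equivalent to the puncture holonomies having \emph{simple} fixed points, i.e.\ being hyperbolic.

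The gap in your write-up is precisely at this equivalence, which you do not establish. Your stated reason for non-vanishing---``adjacent branches have non-trivially differing derivatives at each breakpoint''---is neither what the hypothesis $c_i\neq 1$ says nor what makes the determinant nonzero. Already for $n=1$ your determinant is
\[
(f_2\!\circ\! f_1)'(u_1^t)-(f_1\!\circ\! f_2)'(u_1^t)
=(f_2\!\circ\! f_1)'(u_1^t)\bigl(1-g'(u_1^t)\bigr),
\qquad g:=f_1^{-1}f_2^{-1}f_1f_2,
\]
so it vanishes exactly when the commutator $g$ (the holonomy around the single puncture) is parabolic, i.e.\ when the break size equals $1$. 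Note that $f_1'(u_1^t)\neq f_2'(u_1^t)$ is neither necessary nor sufficient here; for instance if $f_1,f_2$ are both affine one has $(f_2 f_1)'\equiv(f_1 f_2)'$ and the determinant vanishes identically, regardless of whether adjacent derivatives agree. The standing assumption rules this out only because it forces $g$ to be hyperbolic---which is the paper's point, not the one you gave. For general $n$ you have not carried out the expansion at all: the claim that the determinant reduces to a single difference of two chain-rule products is asserted, not shown, and the link to the $c_i$'s is left as a heuristic. To close the argument you would need to show that the determinant factors (up to a nonzero multiplier) as $\prod_i\bigl(1-g_i'(p_i)\bigr)$ over the puncture holonomies $g_i$, which amounts to redoing the paper's fixed-point argument in Jacobian language.
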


\begin{proof}

The proof rests on the following observation: the discontinuity points of the derivative (or say the ends of the intervals $I_1, \cdots, I_n$) are given as fixed points of the image by the associated representation of certain (fixed) elements of the group $\pi_1 \Sigma_{1,n}$. For instance, the right end point of the interval $I_1$ is one of the two fixed points of $\rho(\gamma_1\gamma_2^{-1})$. The element $\gamma_1\gamma_2^{-1})$ represents a simple closed curve going around the singularity in the torus corresponding to the first discontinuity of the derivative.
\noindent One can therefore locally extract the break points of a given PIET from its associated representation. Assume two PIETs have the same representation. Then by the above reasoning the have same break points and this breaks points have same image. By definition, the associated representation is defined by associating to certain curves the elements in $\mathrm{PSL}(2,\mathbb{R})$ which define the PIET when restricted to its intervals of smoothness. Therefore the datum of break points and the representation is enough to characterise a given PIET. The break points being themselves characterised by the representation, we get that the map $\psi$ is locally injective. 

\noindent To deduce local surjectivity, it is enough to reconstruct the PIET associated to a representation $\rho$ by using the inverse operation described above. Identify the position of the break points as they appear as fixed points of the image of certain elements $\pi_1\Sigma_{1,n}$ and define the PIET which is the restriction of $\rho(\gamma_1), \cdots, \rho(\gamma_{n+1})$ to the intervals defined by the breaks points.

\end{proof}

\begin{remark}
The above isomorphism is interesting but contains some useless information. Indeed, every PIET can be conjugated by an affine map to a map whose domain is $[0,1]$. This correspond to conjugating $\mathrm{Hom} \big( \pi_1 \Sigma_{1,n},  \mathrm{PSL}(2, \mathbb{R}) \big) $ by the subgroup of $\mathrm{PSL}(2, \mathbb{R})$ of affine maps, \textit{i.e.} the subgroup of matrices of the form $$\{ \begin{pmatrix}
\lambda &  t \\
0 & \lambda^{-1}
\end{pmatrix} \ | \lambda >1 \ \text{and} \ t \in \mathbb{R} \}.$$ 

\end{remark}

\vspace{3mm}

\paragraph{\bf Renormalisation from the representation viewpoint}
\label{renormalisation}

Geometrically, each step of renormalisation corresponds to a 'cut and paste' operation at the level of polygonal models. This cut and paste is coherent with the representation point of view, in  the following sense: 

\begin{enumerate}

\item a cut and paste realises a diffeomorphism $$ \phi : \Sigma_{1,n} \longrightarrow \Sigma_{1,n}$$

\item this diffeormorphism realises a group homomorphism 

$$  \phi_* : \pi_1 \Sigma_{1,n} \longrightarrow \pi_1 \Sigma_{1,n}$$

\item Renormalisation, read at the level of the parametrisation by $\mathrm{Hom} \big( \pi_1 \Sigma_{1,n},  \mathrm{PSL}(2, \mathbb{R}) \big) $ is equal to 

$$ \rho \longmapsto \rho \circ \phi_* $$

\end{enumerate}

\paragraph{\bf The attracting family}
\label{attracting}
We prove in this paragraph that there is a codimension $1$ family of $\mathbf{P}_{\sigma}$ which is invariant by renormalisation and which also eventually attracts everything. This family is analogous to that of \cite{KhaninKhmelev}.

\noindent This subfamily is defined by forcing a preferred representative in $\mathrm{Hom} \big( \pi_1 \Sigma_{1,n},  \mathrm{PSL}(2, \mathbb{R}) \big) $ for a certain loop going around a singularity. Precisely, the loop going around the 'distinguished' point crossing first the leftmost side at the 'top' of the polygon associated to a permutation.

\vspace{2mm} For every $\sigma$ permutation of genus $1$, we define the subfamily $\mathcal{E}_{\sigma} \subset \mathbf{P}_{\sigma}$ by the following: $ f \in \mathcal{E}_{\sigma}$ if and only if 

\begin{enumerate}

\item the associated PIET has domain $[0,1]$;

\item the loop around the 'distinguished' point is belongs to the affine group. 

\end{enumerate}

\begin{proposition}
\begin{enumerate}

\item The family $\bigcup_{\sigma}{\mathcal{E}_{\sigma}}$ is invariant by Rauzy induction.

\item Renormalisation converges to  $\bigcup_{\sigma}{\mathcal{E}_{\sigma}}$ exponentially fast(up to natural acceleration).

\end{enumerate}

\end{proposition}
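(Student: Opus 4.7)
The plan is to handle parts (1) and (2) separately, relying throughout on the representation-theoretic description of renormalisation just recalled: one elementary step acts as $\rho \mapsto \rho \circ \phi_*$ for a cut-and-paste $\phi$ of $\Sigma_{1,n}$, followed by conjugation by an affine element of $\mathrm{PSL}(2,\mathbb{R})$ encoding the rescaling of the domain back to $[0,1]$.

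For part (1), the domain condition is automatically preserved at each step, so the substantive point is the stability of the affineness of the distinguished holonomy. The distinguished puncture is a topological marking of $\Sigma_{1,n}$ which is setwise fixed by any cut-and-paste realising a Rauzy move. Consequently $\phi_*(\gamma_{\text{dist}})$ is conjugate to $\gamma_{\text{dist}}$ in $\pi_1 \Sigma_{1,n}$, so at the level of representations the new distinguished holonomy is an inner conjugate of the old one. One then verifies that the element implementing this conjugation, once combined with the explicit affine rescaling, lies in the affine subgroup; hence the new distinguished holonomy, being the conjugate of an affine element by an affine one, is again affine.

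For part (2), I would start from Theorem \ref{convergenceMoebius}, which gives exponential convergence $d_1(\mathcal{R}^n T, \mathbf{P}) \le C_T \rho^n$, and then refine the target from $\mathbf{P}$ to $\bigcup_\sigma \mathcal{E}_\sigma$. Within $\mathbf{P}$, membership in $\mathcal{E}_\sigma$ is characterised by a single group element, the distinguished holonomy (a fixed word in the Moebius branches), lying in the two-dimensional affine subgroup. For a circle diffeomorphism with breaks, this holonomy captures the local data of the distinguished break (its position and the derivative ratio $c_1$), and in the Moebius limit this data forces it to be an affine map. The exponential $\mathcal{C}^1$-convergence of the branches to Moebius, combined with the $\mathcal{C}^2$-bounds of Proposition \ref{controlC2} which prevent uncontrolled blow-up under compositions, therefore yields exponential convergence of the distinguished holonomy to the affine subgroup, and hence of $\mathcal{R}^n T$ to $\bigcup_\sigma \mathcal{E}_\sigma$.

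The main obstacle is the quantitative identification in part (2): showing that the distinguished holonomy converges to a genuinely affine element (not just a generic Moebius one) and with exponential speed. This requires matching the topological description of the distinguished loop with the dynamical structure of the break at $0$, and verifying that the $\mathcal{C}^1$-decay rate along branches dominates any combinatorial growth in the length of the relevant words. In the single-break case this reduces to the analysis of \cite{KhaninKhmelev}; in the multi-break setting it hinges on the exponential decay of the dynamical partition $\Delta_n$ provided by Proposition \ref{estimatessize}.
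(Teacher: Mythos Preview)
Your argument for part (1) is overcomplicated and contains an unjustified step. You claim only that $\phi_*(\gamma_{\text{dist}})$ is conjugate to $\gamma_{\text{dist}}$, and then assert without proof that the conjugating element happens to be affine. The paper's observation is much cleaner: a Rauzy move only modifies the rightmost branches, so the branches adjacent to $0$ are literally unchanged, and hence the distinguished holonomy is \emph{equal} (not merely conjugate) to the old one before rescaling. Rescaling is then conjugation by an affine element, so affineness is preserved trivially.

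For part (2) there is a genuine gap, and the mechanism you propose is not the right one. You want to first converge to $\mathbf{P}$ via Theorem \ref{convergenceMoebius} and then argue that ``in the Moebius limit this data forces [the distinguished holonomy] to be an affine map''. This is false: a Moebius map fixing $0$ with prescribed derivative $c_1$ at $0$ is of the form $x \mapsto c_1 x /(cx+1)$ for an arbitrary parameter $c$, so knowing the break position and size does \emph{not} pin down an affine element. Convergence to $\mathbf{P}$ by itself gives no information about this extra parameter, and your invocation of the $\mathcal{C}^2$-bounds does not address it.

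The paper's mechanism is entirely different and is really a continuation of the invariance argument. Since the unrescaled distinguished holonomy is a \emph{fixed} element $f \in \mathrm{PSL}(2,\mathbb{R})$ throughout the induction (by part (1)), the only thing that changes is the accumulated affine rescaling. After $n$ steps this rescaling is $g_{\lambda_n}: x \mapsto \lambda_n x$ with $\lambda_n = |[0,x_n]| \to 0$, and one checks directly (using that $f$ fixes $0$) that $g_{\lambda_n}^{-1} f\, g_{\lambda_n}$ converges to the affine subgroup at rate proportional to $\lambda_n$. Exponential decay of $\lambda_n$ then comes from Proposition \ref{estimatessize}. In short, what drives convergence to $\mathcal{E}_\sigma$ is the shrinking of the renormalisation window, not the convergence of branches to Moebius maps.
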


\begin{proof}

By definition, renormalisation (before normalisation) does not change the image of the corresponding representations evaluated on the \textit{distinguished loop}. Rescaling by an affine element corresponds to conjugating by an affine element. It therefore

\begin{itemize}
\item preserves $\bigcup_{\sigma}{\mathcal{E}_{\sigma}}$;

\item makes everything converge to $\bigcup_{\sigma}{\mathcal{E}_{\sigma}}$.
\end{itemize}

Indeed, take any element in $f \in \mathrm{PSL}(2,\mathbb{R})$, and let $g_{\lambda}:= x \mapsto \lambda x$. It is easily shown that 

$$ g_{\lambda} \circ f \circ g_{\lambda}^{-1} $$ converges to the affine group  when $\lambda$ goes to $0$ at a speed proportional to $\lambda$. We can therefore deduce from this remark that consecutive renormalisations of $T$ a $\infty$-renormalisable PIET converge towards $\mathcal{E}_{\sigma}$ at the same speed as the first-return interval corresponding to $\mathcal{R}^nT$ converges to $0$. By estimates of Proposition \ref{estimatessize}, we get the exponential convergence.

\end{proof}

\vspace{3mm}
\paragraph{\bf Fixing the size of the break}

Each loop around a marked point of $\Sigma_{1,n}$ defines a conjugacy class in $\pi_1 \Sigma_{1,n}$. 

\noindent Consider the representation $\rho : \Sigma_{1,n} \longrightarrow \mathrm{PSL}(2,\mathbb{R})$  which represents a PIET. 

\begin{proposition}
\label{size}
Let $t_1$ a break point of $f$ and let $p_1$ the associated marked point of $\Sigma_{1,n}$ and let $\gamma_1$ a loop around that point. Then the size of the break at $p_1$ is exactly determined by the trace of $\rho(\gamma_1) \in \mathrm{PSL}(2,\mathbb{R})$.

\end{proposition}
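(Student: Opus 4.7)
The plan is to read off $\rho(\gamma_1)\in\PSL$ explicitly, show it is a hyperbolic element whose multiplier at one of its fixed points equals the reciprocal of the break size at $t_1$, and then invoke the classical trace--multiplier identity for hyperbolic elements of $\PSL$.

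Using the polygonal model from the preceding subsection, the break point $t_1$ sits at a vertex where two consecutive top sides $a_i, a_{i+1}$ meet, so the intervals $I_i$ and $I_{i+1}$ of the PIET are the ones adjacent to $t_1$. Exactly as in the proof of Proposition \ref{identification}, a peripheral loop around $p_1$ is freely homotopic in the free group $\pi_1 \Sigma_{1,n}$ to $\gamma_i\gamma_{i+1}^{-1}$ (here $\gamma_i, \gamma_{i+1}$ denote the free generators defined earlier, not the peripheral loop of the proposition). Hence $\rho(\gamma_1)$ is conjugate in $\PSL$ to $f_i\circ f_{i+1}^{-1}$, and in particular its trace equals $\Tr(f_i\circ f_{i+1}^{-1})$. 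Continuity of the PIET at $t_1$ forces $f_i(t_1)=f_{i+1}(t_1)=:s_1$, so $s_1$ is a fixed point of $f_i\circ f_{i+1}^{-1}$, and the chain rule yields
$$ (f_i\circ f_{i+1}^{-1})'(s_1)\;=\;\frac{f_i'(t_1)}{f_{i+1}'(t_1)}\;=\;\frac{1}{c_1}, $$
where $c_1$ is the break size at $t_1$.

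Since $c_1\neq 1$ by the standing assumption, $\rho(\gamma_1)$ is hyperbolic; lifting to $\SL(2,\R)$ produces eigenvalues $\sqrt{c_1}, 1/\sqrt{c_1}$ up to a common sign, whence
$$ \Tr\big(\rho(\gamma_1)\big)^2\;=\;c_1+c_1^{-1}+2. $$
Because $c\mapsto c+c^{-1}$ is a homeomorphism of $(0,1)$ onto $(2,\infty)$ (and symmetrically of $(1,\infty)$), this identity pins down $c_1$ once the orientation built into the definition of break size (the choice of right vs.\ left derivative in the ratio) is fixed. The main obstacle is not analytical but combinatorial: one must match the orientation of the peripheral loop in the polygonal model with the left/right convention, so that the trace recovers $c_1$ itself and not merely the unordered pair $\{c_1,c_1^{-1}\}$. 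Once this bookkeeping is done, the proposition reduces to the classical trace--multiplier identity for hyperbolic elements of $\PSL$.
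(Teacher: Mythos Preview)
Your proof is correct and follows essentially the same route as the paper: identify the peripheral element as (a conjugate of) $f_i\circ f_{i+1}^{-1}$, observe that the break point gives a fixed point whose multiplier equals the break size (or its reciprocal), and conclude via the trace--multiplier identity for hyperbolic elements of $\PSL$. If anything, your version is more explicit than the paper's---you spell out the fixed-point computation and the formula $\Tr(\rho(\gamma_1))^2=c_1+c_1^{-1}+2$, and you flag the orientation bookkeeping needed to distinguish $c_1$ from $c_1^{-1}$, which the paper leaves implicit.
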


\begin{proof}

Let $f \in \mathrm{PSL}(2,\mathbb{R})$ be a hyperbolic element and let $\lambda$ be $\mathrm{Tr}(f)$ (the trace of an element in $\mathrm{PSL}(2,\mathbb{R})$ is the absolute value of the trace of a representative in $\mathrm{SL}(2,\mathbb{R}))$. Let $x$ be a fixed point of $f$. Then we have the following formula

$$ \mathrm{Tr}(f) = (f'(x))^2 + (f'(x))^{-2}.$$ Indeed, any hyperbolic element in $\mathrm{PSL}(2,\mathbb{R})$ is conjugate to an element of the form $\begin{pmatrix}
\lambda & 0 \\
0 & \lambda^{-1}
\end{pmatrix}$ whose action on $\mathbb{R}$ is $t \mapsto \lambda^2t$ hence the result. But the size of the break at $p_1$ is exactly the derivative of $\gamma_1$ at one of its fixed points. Consequently, the size of the break at $t_1$ is(locally) entirely determined by the trace of $\rho(\gamma_1)$.

\end{proof}

\noindent By Proposition \ref{size}, fixing the size of all breaks, say $c = (c_1, \cdots, c_n)$ leads to a subfamily in both $\mathbf{P}_{\sigma}$ and $\mathcal{E}_{\sigma}$ that we denote by  $\mathbf{P}_{\sigma}^c$ and $\mathcal{E}_{\sigma}(\mathbf{c})$.

\subsection{Character varieties}

In the sequel, we introduce material about representations of surface groups into a reductive Lie group, and quickly specialise to the case where this reductive Lie group is $\mathrm{PSL}(2,\mathbb{R})$. Representations of surface groups are objects of interest in their own right, they appear naturally in many contexts such as Teichmüller theory, complex differential equations, the study of geometric structures on surfaces and non-abelian Hodge theory.  \noindent Given a Lie group $G$ and a topological surface $\Sigma$, one might be interested in the moduli space of representations of $\pi_1 \Sigma$ into $G$ which we naturally denote by  $\mathrm{Hom}(\pi_1 \Sigma, G)$. This moduli space carries a natural $G$ action by conjugation, and in many geometric contexts, it is the quotient of  $\mathrm{Hom}(\pi_1 \Sigma_{g,n}, G)$ by this action which is the relevant object to consider (for instance, the \textit{holonomy representation} of a geometric structure is well-defined up to conjugation and only defines an element in this quotient).

\begin{definition}[Character varieties]

Let $G$ be a reductive Lie group. The character variety of $\Sigma_{g,n}$ into $G$ is 

$$ \chi_{g,n}(G) $$ and defined to be the Hausdorff quotient of $\mathrm{Hom}(\pi_1 \Sigma_{g,n}, G)$ by the action of $G$ by conjugation. 

\end{definition}

\noindent Such character varieties are naturally endowed with an action of the \textit{mapping class group} of $\Sigma_{g,n}$ which is just the quotient of the action of the automorphism group of $\pi_1 \Sigma$ (see \cite{FM}). Suppose $n> 0$. Let $\gamma_1, \cdots, \gamma_n \in \pi_1 \Sigma_{g,n}$ simple closed loops around $p_1, \cdots, p_n$ the marked points of $\Sigma_{g,n}$. A free homotopy class of a loop in a manifold corresponds to a conjugacy class in its fundamental group. 

\begin{definition}[Relative character varieties]

Let $G$ be a reductive Lie group. Let $\mathbf{h} = (h_1, \cdots, h_n)$ conjugacy classes in $G$.  The \text{relative character variety} of $\Sigma_{g,n}$ into $G$ is 

$$ \chi_{g,n}(G, \mathbf{h}) $$ the subset of $ \chi_{g,n}(G) $ for which element in $\pi_1 \Sigma_{g,n}$ representing $\gamma_i$ is mapped to an element of the conjugacy class $h_i$.

\end{definition}

\noindent From now onwards we assume that $G = \mathrm{PSL}(2,\mathbb{R})$. 

\begin{theorem}[\cite{GHJW}]
\label{GHJW}
Assume that for all $i$, $h_i$ is a non-trivial conjugacy class. Then the relative character variety $\chi_{g,n}(G, \mathbf{h})$ carries a symplectic form $\omega$ which is invariant by the action of the mapping class group of $\Sigma_{g,n}$.
\end{theorem}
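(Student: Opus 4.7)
The plan is to follow the Atiyah–Bott / Goldman construction adapted to surfaces with boundary, and to handle boundary terms by restricting to the relative character variety. First I would identify the tangent space to $\chi_{g,n}(G)$ at a representation $\rho$ with the first group cohomology $H^1(\pi_1\Sigma_{g,n},\mathrm{Ad}\,\rho)$: deformations of a representation are cocycles $u:\pi_1\Sigma\to\mathfrak{g}$ satisfying $u(\alpha\beta)=u(\alpha)+\mathrm{Ad}\,\rho(\alpha)u(\beta)$, and conjugation by elements of $G$ produces coboundaries. Under the standard identification of group cohomology of a surface group with the singular cohomology of $\Sigma_{g,n}$ with the local system $\mathrm{Ad}\,\rho$, tangent vectors become classes in $H^1(\Sigma_{g,n};\mathrm{Ad}\,\rho)$.

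Next I would define the putative symplectic form. Using the Killing form $\kappa$ on $\mathfrak{g}=\mathfrak{sl}_2(\mathbb{R})$ (which is $G$-invariant and non-degenerate), I would set
\[
\omega_\rho(u,v)\;=\;\int_{\Sigma_{g,n}}\kappa\bigl(u\smile v\bigr),
\]
where $\smile$ is the cup product on cohomology with the pairing of coefficients supplied by $\kappa$. On a closed surface this is immediately antisymmetric, closed (it is topological), and non-degenerate by Poincaré duality; this is Atiyah–Bott / Goldman. The MCG invariance is automatic because every ingredient (fundamental class, cup product, Killing form, local system) depends only on the topology of $\Sigma$ and on $\rho$, and MCG acts by pulling back $\rho$ along a homotopy equivalence.

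The real work, and the main obstacle, is the case $n>0$. On a bordered surface, the pairing above is degenerate: the kernel is precisely the image of the cohomology of the boundary, because the natural pairing for Poincaré–Lefschetz duality lives between $H^1$ and $H^1_c$ (or equivalently, between $H^1(\Sigma,\partial\Sigma)$ and $H^1(\Sigma)$). I would fix this by passing to the relative character variety $\chi_{g,n}(G,\mathbf{h})$. The tangent space there is the parabolic cohomology
\[
H^1_{\mathrm{par}}(\pi_1\Sigma_{g,n},\mathrm{Ad}\,\rho)\;=\;\ker\!\Bigl(H^1(\Sigma_{g,n};\mathrm{Ad}\,\rho)\longrightarrow\bigoplus_{i=1}^n H^1(\langle\gamma_i\rangle;\mathrm{Ad}\,\rho)\Bigr),
\]
i.e.\ the cocycles whose restrictions to each boundary loop $\gamma_i$ are coboundaries, which corresponds geometrically to deformations that keep each $\rho(\gamma_i)$ inside its conjugacy class $h_i$. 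Under the hypothesis that each $h_i$ is non-trivial (so $\rho(\gamma_i)$ has trivial centraliser modulo the centre, at least on a Zariski open set), a diagram chase with the long exact sequence of the pair $(\Sigma,\partial\Sigma)$ identifies $H^1_{\mathrm{par}}$ with the image of $H^1(\Sigma,\partial\Sigma;\mathrm{Ad}\,\rho)\to H^1(\Sigma;\mathrm{Ad}\,\rho)$, which is exactly the orthogonal complement of itself under the cup-product pairing by Poincaré–Lefschetz duality.

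Consequently $\omega$ descends to a \emph{non-degenerate} antisymmetric form on $\chi_{g,n}(G,\mathbf{h})$. Closedness of $\omega$ can be checked either by exhibiting a primitive via Chern–Simons theory, or directly by the Goldman-style computation showing $d\omega$ vanishes on tangent vectors given by three cocycles and using the Jacobi identity for $\kappa$. Finally, MCG invariance descends from the closed case since the MCG of $\Sigma_{g,n}$ preserves the set of boundary conjugacy classes $\mathbf{h}$ (permuting them trivially if we fix a labelling of punctures), so it acts on $\chi_{g,n}(G,\mathbf{h})$ and preserves the topologically defined $\omega$. The hard step is verifying the precise identification of $T\chi_{g,n}(G,\mathbf{h})$ with parabolic cohomology and the non-degeneracy via Poincaré–Lefschetz duality, which is where the hypothesis that the $h_i$ are non-trivial is essential: if some $h_i$ were central, the boundary centraliser would be too large and the quotient would fail to be smooth/symplectic at such representations.
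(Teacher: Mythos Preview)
The paper does not prove this theorem: immediately after the statement it writes ``We refer to \cite{GHJW} for a complete proof of this result (which is way beyond the scope of this article)'' and points to \cite{Goldman} for the closed case. So there is no proof in the paper to compare against; your proposal is already far more substantive than what the paper offers.

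As for the content of your sketch, it is essentially the correct outline of the Goldman/Atiyah--Bott construction extended to bordered surfaces, and it is in the spirit of what \cite{GHJW} actually does: identify tangent spaces with twisted $H^1$, pair via cup product and the Killing form, and cure the boundary degeneracy by restricting to parabolic cohomology, which coincides with the image of $H^1(\Sigma,\partial\Sigma;\mathrm{Ad}\,\rho)\to H^1(\Sigma;\mathrm{Ad}\,\rho)$ so that Poincar\'e--Lefschetz duality yields non-degeneracy. One small inaccuracy: the hypothesis ``$h_i$ non-trivial'' does not give trivial centraliser (a hyperbolic or parabolic element of $\mathrm{PSL}(2,\mathbb{R})$ has a one-dimensional centraliser); what it gives is that the centraliser is a proper subgroup, hence the conjugacy class $h_i$ is a smooth submanifold of $G$ and the moment-map-type constraints defining the relative variety are cut out cleanly. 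Your closedness argument is also only gestured at; in \cite{GHJW} this is handled via group-cohomological/quasi-Hamiltonian methods rather than a direct Chern--Simons primitive, but either route works. None of this affects the logic of the present paper, which only needs the existence of the MCG-invariant symplectic form as a black box.
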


\noindent We refer to \cite{GHJW} for a complete proof of this result (which is way beyond the scope of this article) and \cite{Goldman} for the case of closed surfaces which, although not being directly relevant to the situation at hand, provides a very nice exposition of the construction of the symplectic form in a simpler case.

\vspace{3mm}

\paragraph{\bf Identification with $\mathcal{E}_{\sigma}$}

We explain here the link between these character varieties and renormalisation. Note that conjucagy classes in $\mathrm{PSL}(2,\mathbb{R})$ correspond exactly to level sets of the trace function (apart from the pre-image of $\{1\}$).

$$ \mathrm{Tr} : \mathrm{PSL}(2, \mathbb{R}) \longrightarrow \mathbb{R}_+$$ In particular, fixing the size of the breaks of a PIET corresponds to fixing the conjugacy class of loops around marked points of the associated representation in $\mathrm{PSL}(2,\mathbb{R})$. Let $h = (h_1, \cdots, h_n)$ be the conjugacy classes in $\mathrm{PSL}(2, \mathbb{R})$ corresponding to the choice of breaks $c = (c_1, \cdots, c_n)$.

\begin{proposition}

The projection $\pi : \mathcal{E}_{\sigma}(\mathbf{c}) \longrightarrow \chi_{1,n}(\mathrm{PSL}(2,\mathbb{R}), \mathbf{h} ) $ is a local diffeomorphism.

\end{proposition}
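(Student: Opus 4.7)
The plan is to realise $\pi$ as a composition of maps that have already been analysed. By Proposition~\ref{identification}, $\psi$ embeds $\mathbf{P}_{\sigma}$ as an open subset of $\mathrm{Hom}(\pi_1 \Sigma_{1,n}, \mathrm{PSL}(2,\mathbb{R}))$; and by Proposition~\ref{size}, the sub-family $\mathbf{P}_{\sigma}^{\mathbf{c}}$ is exactly the preimage under $\psi$ of the trace-level set $\mathrm{Hom}(\pi_1 \Sigma_{1,n}, \mathrm{PSL}(2,\mathbb{R}), \mathbf{h})$, where $\mathbf{h}$ is the tuple of hyperbolic conjugacy classes corresponding to $\mathbf{c}$ (hyperbolicity being granted by the standing assumption $c_i \neq 1$). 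Since $\mathcal{E}_{\sigma}(\mathbf{c}) \subset \mathbf{P}_{\sigma}^{\mathbf{c}}$, the map $\pi$ factors as the restriction $\psi|_{\mathcal{E}_{\sigma}(\mathbf{c})}$ followed by the quotient map onto $\chi_{1,n}(\mathrm{PSL}(2,\mathbb{R}), \mathbf{h})$. Proving the statement therefore reduces to showing that $\psi(\mathcal{E}_{\sigma}(\mathbf{c}))$ is a local slice transverse to the $\mathrm{PSL}(2,\mathbb{R})$-conjugation orbits inside the relative representation variety.

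To verify this I would reinterpret the two defining conditions of $\mathcal{E}_{\sigma}(\mathbf{c})$ at the representation level. The condition that the PIET has domain $[0,1]$ is a two-parameter normalisation: it pins down the positions of the two extreme break points, which via the reconstruction procedure in the proof of Proposition~\ref{identification} appear as fixed points of explicit elements of $\mathrm{PSL}(2,\mathbb{R})$. At the representation level this corresponds to modding out by the affine subgroup $B \subset \mathrm{PSL}(2,\mathbb{R})$ of matrices stabilising $\infty$ (a two-dimensional Lie subgroup). The condition that the distinguished loop lies in $B$ is then a codimension-$1$ condition pinning down the remaining one-parameter direction $\mathrm{PSL}(2,\mathbb{R})/B \cong \mathbb{R}\mathbb{P}^1$: among the two fixed points of the hyperbolic element $\rho(\gamma_{\mathrm{dist}})$, one is sent to $\infty$, with the binary choice accounting for the connected components of $\mathcal{E}_{\sigma}(\mathbf{c})$ mentioned in the main theorem. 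Added together, these three conditions match precisely the $3$-dimensional tangent space of a generic $\mathrm{PSL}(2,\mathbb{R})$-conjugation orbit, so $\psi(\mathcal{E}_{\sigma}(\mathbf{c}))$ is a local transversal. A standard slice argument for locally free Lie group actions then concludes: the quotient map restricted to the slice is a local diffeomorphism onto an open subset of $\chi_{1,n}(\mathrm{PSL}(2,\mathbb{R}), \mathbf{h})$. A dimension count confirms this picture, since $\mathrm{Hom}(\pi_1 \Sigma_{1,n}, \mathrm{PSL}(2,\mathbb{R}), \mathbf{h})$ has dimension $3(n+1)-n=2n+3$, from which the $3$-dimensional conjugation orbits leave quotient dimension $2n$, matching the dimension of $\mathcal{E}_{\sigma}(\mathbf{c})$ produced by the $n$-dimensional trace constraint plus the $3$ constraints (i) and (ii) imposed on the $(3n+3)$-dimensional $\mathrm{Hom}$.

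The main obstacle is the transversality verification for the condition $\rho(\gamma_{\mathrm{dist}}) \in B$. One must show that infinitesimal conjugation by any $\xi \in \mathfrak{psl}(2,\mathbb{R})$ not tangent to the Lie algebra $\mathfrak{b}$ of $B$ moves $\rho(\gamma_{\mathrm{dist}})$ out of $B$ to first order. A short computation in coordinates---diagonalising $\rho(\gamma_{\mathrm{dist}})$ as $\mathrm{diag}(\lambda,\lambda^{-1})$ and examining $\mathrm{Ad}(\rho(\gamma_{\mathrm{dist}})^{-1})\xi - \xi$---shows that the obstruction vanishes if and only if $\lambda^2 = 1$, i.e.\ if and only if $\rho(\gamma_{\mathrm{dist}})$ is non-hyperbolic. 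Thus the hyperbolicity guaranteed by $c_{\mathrm{dist}} \neq 1$ (Proposition~\ref{size}) is precisely what makes the slice argument go through. Beyond this, only minor care is required to track the discrete ambiguity in the choice of fixed point sent to $\infty$ and to identify the resulting local sheets with connected components of $\mathcal{E}_{\sigma}(\mathbf{c})$.
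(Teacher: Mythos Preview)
Your argument is correct, but it takes a different route from the paper's. The paper proceeds by a direct, group-level normalisation rather than an infinitesimal slice argument: given a representation $\rho$ near $\psi(\mathcal{E}_{\sigma}(\mathbf{c}))$, one first conjugates by an element of $\mathrm{PSL}(2,\mathbb{R})$ so that $\rho(\gamma_{\mathrm{dist}})$ becomes affine (possible precisely because this element is hyperbolic), and then conjugates by the unique affine map sending the resulting domain $[a,b]$ onto $[0,1]$. Uniqueness follows from the elementary fact that the only orientation-preserving affine self-map of $[0,1]$ is the identity. This establishes that each nearby conjugacy class meets $\mathcal{E}_{\sigma}(\mathbf{c})$ in exactly one point, which is a slightly sharper conclusion than local transversality and avoids any Lie-algebra computation. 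Your approach, by contrast, makes the match between the three defining constraints of $\mathcal{E}_{\sigma}$ and the three dimensions of the conjugation orbit explicit, and isolates exactly where hyperbolicity enters (the nonvanishing of $\lambda^2-1$ in the $\mathrm{Ad}$-computation). Both arguments hinge on the same fact---that a hyperbolic element can be conjugated into the affine group in an essentially unique way modulo $B$---but the paper exploits it constructively while you exploit it infinitesimally.
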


\begin{proof}

Consider $T$ an element of $\mathbf{P}_{\sigma}$ and $\rho$ its associated representation. Conjugating $\rho$ by an element $g \in \mathrm{PSL}(2,\mathbb{R})$ is equivalent to conjugating $T$ by $g$. The claim above is equivalent to the existence of a unique element of $\mathcal{E}_{\sigma}(\mathbf{c})$ in each $\mathrm{PSL}(2,\mathbb{R})$-conjugacy classes. This is shown to be true the following way: any hyperbolic element of $\mathrm{PSL}(2,\mathbb{R})$ is conjugate to an affine map. We can therefore conjugate $T$ in order to make the distinguished loop (the one going around the distinguished point) affine. The resulting PIET is a map form an interval $[a,b]$ to itself. From this point it suffices to conjugate by an affine map mapping $[a,b]$ onto $[0,1]$. The uniqueness is guaranteed by the following fact: an orientation preserving affine map which maps $[0,1]$ to itself is the identity.

\end{proof}

\noindent We deduce from that the following corollary 

\begin{corollary}

The pull-back of the natural symplectic form defines on $\bigcup_{\sigma}{E_{\sigma}(\mathbf{c}}$ a symplectic form $\omega$ locally invariant by the renormalisation operator $\mathcal{R}$.

\end{corollary}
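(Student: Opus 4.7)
The plan is to transport the symplectic form $\omega_{\mathrm{GHJW}}$ on the relative character variety $\chi_{1,n}(\mathrm{PSL}(2,\mathbb{R}), \mathbf{h})$ supplied by Theorem \ref{GHJW} along the local diffeomorphism $\pi : \mathcal{E}_{\sigma}(\mathbf{c}) \to \chi_{1,n}(\mathrm{PSL}(2,\mathbb{R}), \mathbf{h})$ of the preceding Proposition, and then to verify that the resulting pull-back $\omega := \pi^* \omega_{\mathrm{GHJW}}$ is invariant under $\mathcal{R}$. Since $\pi$ is a local diffeomorphism, closedness and non-degeneracy of $\omega$ follow immediately from the corresponding properties of $\omega_{\mathrm{GHJW}}$. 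I would thus first define $\omega$ stratum by stratum on each $\mathcal{E}_{\sigma}(\mathbf{c})$ and only afterwards argue that these local forms fit together on the union $\bigcup_{\sigma} \mathcal{E}_{\sigma}(\mathbf{c})$.

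The key step is to identify, via $\pi$, one iteration of renormalisation with the action on $\chi_{1,n}(\mathrm{PSL}(2,\mathbb{R}), \mathbf{h})$ of an element of $\mathrm{MCG}(\Sigma_{1,n})$. This is exactly the content of the earlier paragraph on renormalisation from the representation viewpoint: the elementary cut-and-paste step realising Rauzy induction is a topological self-diffeomorphism $\phi : \Sigma_{1,n} \to \Sigma_{1,n}$ whose induced map $\phi_*$ on $\pi_1 \Sigma_{1,n}$ acts on the associated representation by precomposition $\rho \mapsto \rho \circ \phi_*$. After the affine rescaling that brings us back into $\mathcal{E}_{\sigma'}(\mathbf{c})$, which corresponds to conjugation by an affine element and is therefore invisible in the character variety quotient, one obtains a locally commutative square
\[
\begin{CD}
\mathcal{E}_{\sigma}(\mathbf{c}) @>\mathcal{R}>> \mathcal{E}_{\sigma'}(\mathbf{c}) \\
@VV\pi V @VV\pi V \\
\chi_{1,n}(\mathrm{PSL}(2,\mathbb{R}), \mathbf{h}) @>[\phi]>> \chi_{1,n}(\mathrm{PSL}(2,\mathbb{R}), \mathbf{h})
\end{CD}
\]
Combining this square with the $\mathrm{MCG}(\Sigma_{1,n})$-invariance of $\omega_{\mathrm{GHJW}}$ provided by Theorem \ref{GHJW} yields $\mathcal{R}^* \omega = \omega$ by pull-back around the diagram.

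The main obstacle I foresee is the bookkeeping around the fact that $\mathcal{R}$ does not preserve the individual strata $\mathcal{E}_{\sigma}(\mathbf{c})$ but rather permutes them, so some care is needed to check that the pulled-back forms agree on the way from $\sigma$ to $\sigma'$ and that the polygonal model together with the distinguished loop used to define $\mathcal{E}_{\sigma}$ remain coherent under the change of combinatorics. Concretely, this reduces to verifying that the identification $\psi$ of Proposition \ref{identification} is compatible with the change of marked permutation induced by one Rauzy step, which in turn follows from explicitly tracking how the generators $[\gamma_i]$ of $\pi_1 \Sigma_{1,n}$ are modified by the cut-and-paste diffeomorphism $\phi$. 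Once this combinatorial compatibility is established, the conclusion is purely formal.
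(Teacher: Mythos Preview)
Your proposal is correct and is precisely the argument the paper has in mind: the paper's own proof is the one-line ``Combination of Theorem \ref{GHJW} and Subsection \ref{renormalisation}'', i.e.\ pull back the GHJW form via the local diffeomorphism $\pi$ and use that renormalisation descends to a mapping class group element on the relative character variety. Your commutative square and the remark about the affine rescaling being invisible in the quotient make explicit exactly what the paper leaves implicit; the bookkeeping you flag about strata being permuted is real but, as you note, purely formal once the cut-and-paste description of $\phi_*$ is unwound.
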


\begin{proof}

Combination of Theorem \ref{GHJW} and Subsection \ref{renormalisation}.

\end{proof}

\paragraph{\bf Dimension counts}

We give here the dimension of character varieties for $g=1$. 

$$ \dim \chi_{1,d}(\mathrm{PSL}(2,\mathbb{R}) ) =  3d$$

$$ \dim \chi_{1,d}(\mathrm{PSL}(2,\mathbb{R}), \mathbf{h} ) =  2d $$

\noindent In particular, the dimension of $\mathcal{E}_{\sigma}$ is $3d$ and for any $\mathbf{c}$ the dimension of $\mathcal{E}_{\sigma}(\mathbf{c})$ is $2d$

\section{Comments and open problems}

\subsection{Dynamics of the renormalisation operator}

One important objective of renormalisation theory in the context of circle diffeomorphisms is to characterise $\mathcal{C}^1$-conjugacy classes. It has been conjectured for a long time that under mild arithmetic conditions, the decorated rotation number determines rigidity classes. 

\begin{conjecture}
\label{conjecture1}
There is a full-measure set of decorated rotation numbers $\gamma$ for which the following statement holds. Let $T_1$ and $T_2$ be circle diffeomorphisms with breaks, with breaks of the same size and same decorated rotation number. Then $T_1$ and $T_2$ are smoothly conjugate.
\end{conjecture}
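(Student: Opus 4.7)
The plan is to combine the dynamical convergence results of the present article with a hyperbolicity statement for the mapping class group sub-action on the relative character variety. The outline I would pursue goes as follows.

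First I would reduce the rigidity question to a statement about exponential convergence of renormalisations. By the Main Theorem, for any $T$ with $d-1$ breaks of sizes $\mathbf{c}$ and irrational rotation number, $\mathcal{R}^nT$ converges exponentially fast to the attractor $\mathcal{E}(\mathbf{c})$, identified with an open set of $\chi_{1,d}(\mathrm{PSL}(2,\mathbb{R}), \mathbf{h})$; moreover the combinatorics of the Rauzy path is determined entirely by the decorated rotation number. Hence if $T_1, T_2$ have the same decorated rotation number and identical break sizes, then $\mathcal{R}^nT_1$ and $\mathcal{R}^nT_2$ sit over the same orbit of the mapping class group sub-action on $\chi_{1,d}(\mathrm{PSL}(2,\mathbb{R}),\mathbf{h})$ under the identification of Proposition \ref{identification}. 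The rigidity conjecture is implied by exponential convergence of $\mathcal{R}^n T_1$ to $\mathcal{R}^n T_2$ in the $\mathcal{C}^1$-topology via the standard telescoping argument for circle homeomorphisms (as in \cite{KhaninTeplinsky}): the $\mathcal{C}^2$-bounds of Proposition \ref{controlC2} ensure that any $\mathcal{C}^0$ convergence upgrades to $\mathcal{C}^1$ convergence, which in turn yields a $\mathcal{C}^1$-conjugacy between $T_1$ and $T_2$.

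Second, I would establish the required contraction on the attractor. Since both $\mathcal{R}^nT_i$ approach $\mathcal{E}(\mathbf{c})$ exponentially, it suffices to prove that the two induced orbits on the character variety (generated by the same sequence of mapping class group elements) converge exponentially to one another for a full-measure set of admissible infinite Rauzy paths. The symplectic form from Theorem \ref{GHJW} is preserved by this action, so uniform contraction is impossible; instead one must establish \emph{hyperbolicity}, producing a splitting of the tangent bundle to $\chi_{1,d}(\mathrm{PSL}(2,\mathbb{R}),\mathbf{h})$ into stable and unstable distributions of equal dimension $d$, with positive Lyapunov exponents. Two points with matching Rauzy combinatorics lie in the same unstable leaf of the limiting inverse system, and the stable direction provides the exponential contraction needed.

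Third, the required hyperbolicity would be established by an Oseledets-type argument for the renormalisation cocycle, in analogy with the Kontsevich--Zorich cocycle over Rauzy--Veech induction. Concretely, I would linearise the mapping class group sub-action along a typical orbit, use the symplectic structure to control the spectrum of the derived cocycle, and then invoke simplicity-of-exponents techniques (Avila--Viana, Avila--Eskin--Forni) together with a well-chosen invariant measure on the space of Rauzy paths to conclude that for a full-measure set of decorated rotation numbers all Lyapunov exponents are nonzero. The full-measure set in the conjecture would then be the set of decorated rotation numbers whose Rauzy path is Oseledets-generic.

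The principal obstacle is undoubtedly the hyperbolicity step. Unlike the $d=1$ case of \cite{KhaninKocicMazzeo}, where the attractor is two-dimensional and an explicit expanding--contracting structure can be exhibited by direct computation, the general case requires an intrinsic ergodic theory for the mapping class group action on $\chi_{1,d}(\mathrm{PSL}(2,\mathbb{R}),\mathbf{h})$: identifying an invariant measure with good mixing properties, controlling the symplectic cocycle along $\infty$-complete Rauzy paths, and proving positivity (or at least simplicity of the top) of Lyapunov exponents are each substantial problems in their own right. The remaining steps, namely the passage from convergence of renormalisations to $\mathcal{C}^1$-conjugacy and the identification of combinatorics with decorated rotation number, are essentially standard given the framework built in the preceding sections.
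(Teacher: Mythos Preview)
The statement you are attempting is \emph{Conjecture}~\ref{conjecture1}: the paper does not prove it, and explicitly presents it as open. What the paper does say (in the paragraph immediately following the conjecture, and in Conjecture~\ref{conjecture2}) is exactly the reduction you describe in your first two steps: under suitable arithmetic conditions, exponential $\mathcal{C}^1$-convergence of $\mathcal{R}^nT_1$ to $\mathcal{R}^nT_2$ forces $\mathcal{C}^1$-conjugacy, and this convergence would follow from hyperbolicity of $\mathcal{R}$ on $\mathcal{E}(\mathbf{c})$, with the symplectic structure pairing the $d$ unstable directions (known from linear Rauzy--Veech) with $d$ stable ones. So your outline is not a different route; it is a fleshed-out version of the paper's own heuristic, and you and the authors agree on where the difficulty lies.

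The genuine gap is therefore the one you yourself flag: your third step is a programme, not an argument. Invoking ``Oseledets-type'' reasoning and ``simplicity-of-exponents techniques (Avila--Viana, Avila--Eskin--Forni)'' presupposes an ergodic invariant measure for the renormalisation cocycle on the \emph{non-linear} attractor $\mathcal{E}(\mathbf{c})$, integrability of the cocycle, and a mechanism (pinching/twisting, Galois, or otherwise) to exclude zero exponents. None of these is supplied, and the analogy with the Kontsevich--Zorich cocycle is only formal: there the base dynamics is Teichm\"uller flow on a stratum with its Masur--Veech measure, whereas here the base is the mapping-class sub-action on a relative $\mathrm{PSL}(2,\mathbb{R})$-character variety, for which no comparable ergodic theory is available. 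Until that is established, what you have written is a plausible strategy for Conjecture~\ref{conjecture1} via Conjecture~\ref{conjecture2}, in line with the paper's own discussion, but not a proof.
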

 
\noindent This conjecture is implied by a sufficiently refined understanding of the dynamics of $\mathcal{R}$. Precisely we have that under arithmetic conditions (which have to do with the existence of "small divisors" type of problem), exponentially fast convergence of $\mathrm{d}_{\mathcal{C}^1}(\mathcal{R}^nT_1,\mathcal{R}^nT_2)$ to $0$ implies $\mathcal{C}^1$-conjugacy between $T_1$ and $T_2$. 

\vspace{2mm}

\noindent In \cite{KhaninYampolsky}, the authors show that in the case of one break point, restricted to the set of irrational rotation numbers, the renormalisation operator is hyperbolic and that level sets of the rotation number function are stable spaces of $\mathcal{R}$. This is enough to prove Conjecture \ref{conjecture1} in this case. This motivates the following conjecture

\begin{conjecture}
\label{conjecture2}
The renormalisation operator $\mathcal{R}$, restricted $\mathcal{E}_{\mathbf{c}}$, is hyperbolic restricted to its attractor.
\end{conjecture}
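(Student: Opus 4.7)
The natural starting point is to push the identification from the Main Theorem all the way through: $\mathcal{E}_{\sigma}(\mathbf{c})$ is, up to local diffeomorphism, the relative character variety $\chi_{1,d}(\mathrm{PSL}(2,\mathbb{R}),\mathbf{h})$, a smooth symplectic manifold of dimension $2d$, and $\mathcal{R}$ is realised as a sub-action of $\mathrm{MCG}(\Sigma_{1,d})$. So the plan is to reduce Conjecture \ref{conjecture2} to a statement about the combinatorially prescribed sequence of mapping class group elements produced by the Rauzy algorithm, acting on the symplectic manifold $\chi_{1,d}(\mathrm{PSL}(2,\mathbb{R}),\mathbf{h})$. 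Since the dynamics preserves the symplectic form $\omega$, any hyperbolic splitting at a point must be Lagrangian: stable and unstable subspaces of equal dimension $d$, with $\omega$ pairing them non-degenerately. This is an important guiding constraint and also a sanity check matching the $d=1$ case of \cite{KhaninYampolsky}.

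The first concrete step is to identify the conjectural attractor. The analogue of the Gauss map on two-dimensional parameter space in the one-break case suggests that after projecting onto the two-dimensional quotient corresponding to the underlying $2$-GIET (rotation number), the induced dynamics is already conjugate to the Gauss shift and therefore has a well-known hyperbolic structure on its natural extension. The attractor inside $\mathcal{E}_{\sigma}(\mathbf{c})$ should then be foliated by the level sets of the decorated rotation number; I would expect these leaves to form the stable foliation of $\mathcal{R}$, each leaf of dimension $d$ consisting of maps with the same combinatorics but varying "geometric" parameters. Proving the stable foliation is genuinely contracting amounts to refining the $\mathcal{C}^1$-convergence statement of Theorem \ref{convergenceMoebius} into a uniform $\mathcal{C}^1$-exponential contraction along leaves, which should follow from Proposition \ref{controlC2} together with a Denjoy-Koksma argument applied to the cocycle of derivatives along the dynamical partition.

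Producing the unstable directions is where the real work lies. I would compute the derivative $D\mathcal{R}$ at a fixed point of a purely periodic Rauzy combinatorics (a "quadratic irrational" decorated rotation number) by differentiating the composition of mapping class group elements acting on $\chi_{1,d}(\mathrm{PSL}(2,\mathbb{R}),\mathbf{h})$ via the Goldman formula for Hamiltonian vector fields associated to trace functions. Concretely, each elementary Rauzy step acts as a Dehn-twist-like symplectomorphism, and the sequence of twists along the periodic orbit should generate a pseudo-Anosov-type element of $\mathrm{MCG}(\Sigma_{1,d})$. Showing that its linearisation at the fixed representation has $d$ eigenvalues of modulus $>1$ and $d$ of modulus $<1$ would establish hyperbolicity at the periodic points; a distortion/cone-field argument, modelled on \cite{KhaninTeplinsky} and propagated using the symplectic form, would then extend this to the full attractor.

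The main obstacle, to my mind, is uniformity. For $d=1$ the parameter space is two-dimensional and one can verify hyperbolicity by a direct explicit computation; for $d\geq 2$ one has to control a family of symplectomorphisms of a $2d$-dimensional manifold whose combinatorics depend on the decorated rotation number, and there is no reason a priori that the hyperbolicity constants at the infinitely many periodic orbits are bounded away from $1$. The most promising route seems to be a cone-field argument on the Goldman symplectic manifold: define a family of invariant Lagrangian cones spanned by the Hamiltonian vector fields of trace functions of the curves "activated" by Rauzy moves, and show that any elementary Rauzy mapping class acts strictly expansively on one cone and contractively on its complementary Lagrangian cone. Establishing such a cone structure, and reconciling it with the arithmetic Diophantine conditions that already appear in Conjecture \ref{conjecture1}, is the step I expect to occupy the bulk of the proof.
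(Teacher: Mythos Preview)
The statement you are trying to prove is labelled a \emph{Conjecture} in the paper, and the paper does not supply a proof. There is therefore nothing to compare your argument against in the sense the exercise presumes: the authors explicitly leave Conjecture~\ref{conjecture2} open and offer only a short heuristic paragraph in its support. Your proposal is not a proof either; it is a programme, and you say so yourself when you identify uniformity of hyperbolicity constants as ``the main obstacle'' and ``the step I expect to occupy the bulk of the proof''. So at the level of status, both you and the paper are in the same place: neither has a proof.

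It is still worth contrasting the two heuristics. The paper's suggested route is the reverse of yours: it takes the $d$ \emph{unstable} directions as the known part, imported from the hyperbolicity of Rauzy--Veech induction on standard (linear) interval exchange transformations, and then argues that the invariant symplectic form manufactures the $d$ stable directions for free via the $\lambda\leftrightarrow\lambda^{-1}$ spectral pairing for symplectic linear maps. You, by contrast, propose to build the \emph{stable} foliation first, from the $\mathcal{C}^1$-contraction estimates of Theorem~\ref{convergenceMoebius} and Proposition~\ref{controlC2} along level sets of the decorated rotation number, and then to produce the unstable directions by a pseudo-Anosov/cone-field analysis of the mapping class group elements generated by Rauzy steps. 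Your use of the symplectic form is only as a consistency check (Lagrangian splitting), not as the mechanism that produces half of the hyperbolic splitting.

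The paper's heuristic is shorter but hides a genuine difficulty: the linear Rauzy--Veech hyperbolicity lives on the simplex of linear IETs, not on $\mathcal{E}_{\sigma}(\mathbf{c})$, and transferring it to the nonlinear attractor is precisely the step that is missing. Your programme is more honest about where the work is, but the cone-field construction you sketch (Lagrangian cones spanned by Hamiltonian vector fields of trace functions of ``activated'' curves) is speculative, and you give no indication of why a single elementary Rauzy move should act strictly expansively on such a cone. That is the real gap in your proposal: absent that computation, the cone-field argument has no engine. Neither approach, as written, closes the conjecture.
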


\noindent Recall that the dimension of $\mathcal{E}_{\mathbf{c}}$ is equal to $2d$. Heuristically, the decorated rotation number accounts for $d$ unstable directions. The existence of these unstable directions is known in the linear case, it is the hyperbolicity of the Rauzy-Veech induction restricted to \textit{standard interval exchange transformations}.  The existence of an invariant symplectic form would allow us to immediately construct the stable directions. Indeed, it is a standard property of symplectic matrices that an eigenvalue $\lambda \neq 1$ comes with the eigenvalue $\lambda^{-1}$ with equal multiplicity.

\subsection{Connected components of character varieties}

\noindent Understanding the topology of character varieties is a challenging problem in itself, even more so in the case of surfaces with boundary components. Classification of connected components of $\mathrm{PSL}(2,\mathbb{R})$-relative character varieties is worked out in Mondello's article \cite{Mondello}.

To a representation $\rho : \pi_1 \Sigma_{g,n} \longrightarrow \mathrm{PSL}(2, \mathbb{R})$ one can associate an algebraic invariant called its \textit{Euler number}, which we denote by $\mathrm{eu}(\rho) \in \mathbb{R}$. In the case $n=0$ it is nothing but the characteristic class defined by the flat bundle of rank $2$ over $\Sigma_g$ of monodromy $\rho$ and is integer-valued. The definition in the case with boundary is slightly more involved as it requires to take care of extra terms coming from the boundary.

If $\mathbf{h}$ is fixed, it is well-known that $$ \mathrm{eu} : \chi_{g,n}(\mathrm{PSL}(2,\mathbb{R}), \mathbf{h}) \longrightarrow \mathbb{R}$$ takes only finitely many values and is constant on connected components. Modello shows in \cite{Mondello} using the theory of Higgs bundles, that this Euler number actually characterises connected components. We ask the following question

\begin{question}

To which connected components belong representations constructed from piecewise Moebius circle diffeomorphisms with breaks?

\end{question}
 
 \noindent There is at least one component that can be excluded. The component with maximal Euler number identifies with the Teichmüller space of complete hyperbolic metrics. It is easily shown that the mapping class group acts properly discontinuously on this component. But on the other hand, recurrence properties of the renormalisation operator(which identifies with a "sub-action" of the mapping class group) ensure that the mapping class group action on a component appearing in the study of circle diffeomorphisms with breaks is not proper and discontinuous. Another question we think is interesting is the following

 \begin{question}
What are the representations which can arise as the representation of a minimal circle diffeomorphism with break?
\end{question}

 \subsection{Links to mapping class group dynamics}

As mentioned in the previous paragraph, the renormalisation operator and the mapping class group action on $\chi_{g,n}(\mathrm{PSL}(2,\mathbb{R}), \mathbf{h}) $ are closely related. In general, we know of the existence of special components for which the latter action is properly discontinuous. However, for other components it is suspected that this action is going to be ergodic with respect to the symplectic volume. This conjecture remains widely open in the general case although it has been proven in the (closed) genus $2$ case by Marché and Wolff \cite{MarcheWolff}. Another notable piece of work in the $\mathrm{SL}(2, \mathbb{C})$-case is \cite{Goldman4}, where the ergodic properties of the mapping class group action is completely analysed in the case of the one-holed torus.

\noindent Although it is not so clear to the authors what Conjecture \ref{conjecture2} implies for the mapping class group action on relative character varieties, it would certainly provide some interesting insight into its topological dynamics. Indeed, it would imply that a small sub-action displays some relatively rich dynamical behaviour. This sub-action could be replicated in many different way by conjugating in $\mathrm{MCG}(\Sigma_{1,n})$, potentially offering a path to a proof of the existence of dense orbits.

\bibliographystyle{alpha}
\bibliography{biblio}
 
\end{document}